
\documentclass[journal,twoside,web]{ieeecolor}


\usepackage{generic}
\usepackage{cite}
\usepackage{amsmath,amssymb,amsfonts}
\usepackage[scr = dutchcal]{mathalpha}
\usepackage{graphicx}
\usepackage{textcomp}
\usepackage{stmaryrd}
\usepackage{tabularx}
\usepackage{multirow}
\usepackage{float}
\usepackage{mathtools}
\usepackage{comment}

\usepackage{color}

\newcommand{\mbf}[1]{\mathbf{ #1}}
\newcommand{\tbf}[1]{\textbf{#1}}

\newcommand{\mcl}[1]{\mathcal{#1}}
\newcommand{\mscr}[1]{\mathscr{#1}}

\newcommand{\R}{\mathbb{R}}
\newcommand{\N}{\mathbb{N}}

\newcommand{\ip}[2]{\left\langle #1, #2 \right\rangle}


\newcommand{\bmat}[1]{\begin{bmatrix}#1\end{bmatrix}}

\newcommand{\smallbmat}[1]{\left[\scriptsize\begin{smallmatrix}
		#1\end{smallmatrix} \right]}

\newcommand{\srbmat}[1]{\small\left[\!\!\!\begin{array}{r}#1\end{array}\!\!\!\right]}
\newcommand{\slbmat}[1]{\small\left[\!\!\!\begin{array}{l}#1\end{array}\!\!\!\right]}

\newtheorem{thm}{Theorem}
\newtheorem{defn}[thm]{Definition}
\newtheorem{lem}[thm]{Lemma}
\newtheorem{prop}[thm]{Proposition}

\let\bl\bigl
\let\bbl\Bigl
\let\bbbl\biggl

\let\br\bigr
\let\bbr\Bigr
\let\bbbr\biggr



\floatstyle{ruled}
\newfloat{block}{thbp}{lop}
\floatname{block}{Block}

\newcommand\Scale[2]{\scalebox{#1}{\mbox{\ensuremath{\displaystyle #2}}}}
\newcommand\Resize[2]{\resizebox{#1}{!}{\mbox{\ensuremath{\displaystyle #2}}}}

\title{\LARGE \bf
	A PIE Representation of Scalar Quadratic PDEs and Global Stability Analysis Using SDP
}

\author{Declan Jagt, Peter Seiler, Matthew Peet %
\thanks{\tbf{Acknowledgement:} This work was supported by National Science Foundation grants CMMI-1935453 and CMMI-1931270.} %
}

\begin{document}

	\maketitle
	\thispagestyle{empty}
	\pagestyle{plain}

\begin{abstract}		
	It has recently been shown that the evolution of a linear Partial Differential Equation (PDE) can be more conveniently represented in terms of the evolution of a higher spatial derivative of the state. This higher spatial derivative (termed the `fundamental state') lies in $L_2$ - requiring no auxiliary boundary conditions or continuity constraints. Such a representation (termed a Partial Integral Equation or PIE) is then defined in terms of an algebra of bounded integral operators (termed Partial Integral (PI) operators) and is constructed by identifying a unitary map from the fundamental state to the state of the original PDE. Unfortunately, when the PDE is nonlinear, the dynamics of the associated fundamental state are no longer parameterized in terms of PI operators. However, in this paper, we show that such dynamics can be compactly represented using a new tensor algebra of partial integral operators acting on the tensor product of the fundamental state. We further show that this tensor product of the fundamental state forms a natural distributed equivalent of the monomial basis used in representation of polynomials on a finite-dimensional space. This new representation is then used to provide a simple SDP-based Lyapunov test of stability of quadratic PDEs. The test is applied to three illustrative examples of quadratic PDEs.
\end{abstract}


\section{INTRODUCTION}

In this paper, we consider the problem of representation and stability analysis of quadratic Partial Differential Equations (PDEs). Quadratic PDEs are frequently used to model physical processes, including fluid dynamics (e.g. Navier-Stokes), population growth (e.g. Fisher's equation), and wave propagation (e.g. Korteweg-de Vries). However, certain aspects of the standard PDE representation of such nonlinear systems present difficulties when applied to the problems of analysis and simulation.
For example, consider a modified version of Burger's equation with Dirichlet Boundary Conditions (BCs),
\begin{align}\label{eq:Burgers_intro}
	u_{t}(t,s)&=u_{ss}(t,s)+ru(t,s) -u(t,s)u_{s}(t,s),	&	s&\in[0,1],	\notag\\
	u(t,0)&=0,\qquad u(t,1)=0,
\end{align}
wherein we added a reaction term $ru(t,s)$ to the dynamics.
To verify stability of this system, we can use the candidate Lyapunov Functional (LF) $V(u)=\frac{1}{2}\|u\|_{L_2}^2\geq 0$. Using integration by parts, and invoking the BCs, it can be shown that the derivative of this LF along solutions to the PDE is $\dot{V}(u)=r\|u\|_{L_2}^2-\|u_{s}\|_{L_2}^2$. Then, for any $r<\pi^2$, it can be proven that this derivative satisfies $\dot{V}(u)\leq 0$, thus certifying stability of the system~\cite{valmorbida2014semi}. However, verifying that $\dot{V}(u)\leq 0$ for $r\in (0,\pi^2)$ is not trivial, as it necessitates deriving some upper bound on the norm of $u$ in terms of the norm of its derivative $u_{s}$, invoking e.g. the Poincar\'e inequality. In this manner, the representation of the system dynamics as a (polynomial) function of $u$, $u_{s}$, and $u_{ss}$ complicates the task of verifying suitability of even a simple, fixed LF candidate $V(u)=\frac{1}{2}\|u\|^2_{L_2}$.

Similarly, suppose that we adjust the BCs, imposing e.g. a Neumann condition $u_{s}(t,1)=0$. In this case, despite the fact that neither the gradient $\nabla V(u)$ nor the expression for $\dot{u}(t)$ is changed, the derivative $\dot{V}(u)=\nabla V(u) \dot{u}$ no longer satisfies $\dot{V}(u)=r\|u\|^2_{L_2}-\|u_{s}\|_{L_2}^2$ along solutions to the system. More generally, it is unclear how the BCs affect stability properties of the system, and how we can account for them in testing fitness of any LF candidate.

Because of these difficulties associated with verifying suitability of LFs for nonlinear PDEs, most prior work focuses only on limited classes of PDEs with specific BCs, proving results only for the particular system under consideration.
For example, extensive research has been done deriving stability conditions for Navier-Stokes equations~\cite{goulart2012SOS_stability_fluid,huang2015SOS_stability_fluid,ahmadi2019framework,fuentes2022SOS_stability_fluid}, commonly expanding solutions using e.g. a Galerkin basis, and proving decay of a LF using Sum-Of-Squares (SOS) techniques.
Similarly, stability of the Kuramoto-Sivanshinsky Equation was studied in~\cite{goluskin2019Energy_Kuramoto_Sivashinsky}, assuming periodic BCs, and verifying negativity $\dot{V}(u)\leq 0$ of a quadratic LF using discretization.
However, these results apply only to specific PDEs, offering limited insight into how to test stability of other nonlinear systems.

Prior work studying more general systems includes~\cite{fridman2016new}, deriving a Linear Matrix Inequality (LMI) stability test for a class of wave equations $u_{tt}=u_{ss}+f(u,s,t)$, assuming a bound $f(u,s,t)<g$ on the nonlinear term. 
Similarly, stability of classes of 2nd-order, parabolic PDEs is analysed in~\cite{papachristodoulou2006SOS_stability_PDE,valmorbida2015stability,meyer2015stability,mironchenko2019ISS_nonlinear_PDE}, e.g. deriving polynomial positivity conditions for verifying stability of such systems. However, these results too are limited in their applications, and the proposed stability conditions may be challenging to enforce in practice.

Alternatively, the authors of~\cite{korda2022moments,tacchi2022moments,marx2018moments} propose the use of moment methods for analysis of nonlinear PDEs, testing bounds on e.g. energy functionals $V(u)=\|u\|_{L_2}^2$ using Semidefinite Programming (SDP). In order to obtain such bounds, these works study a dual formulation of the PDE system, attempting to use properties of this dual representation to derive properties of the original PDE. However, an intuitive algorithm applying these techniques to test stability of general PDEs has not yet been developed.

In order to construct a comprehensive framework for testing stability of nonlinear PDEs, in this paper, we illustrate how the main obstacle prohibiting the construction of such a framework is the PDE representation itself.
In particular, as noted, the representation of the PDE dynamics in terms of a polynomial function of not only the state $u$ but also its derivatives $(u_{s},u_{ss},\hdots)$ makes it difficult to verify $\dot{V}(u)\leq 0$ for any candidate LF $V(u)$, in general. Moreover, the presence of BCs in the PDE representation further complicates such analysis, as any LF need only satisfy $\dot{V}(u)\leq 0$ for solutions $u$ satisfying the BCs. A more suitable representation of distributed-state systems, then, should ideally satisfy the following two properties:

1. The system dynamics should be represented as a polynomial function of only a single distributed state $v(t)\in L_2$.

2. The representation should not impose any auxiliary constraints on the state $v(t)$, i.e. should be free of BCs and continuity constraints. 

In this paper, we show how, for scalar-valued, quadratic, 1D PDEs, we may derive an equivalent representation satisfying both of these conditions, termed the quadratic Partial Integral Equation (PIE) representation.

\section{Problem Formulation}

\subsection{Notation}\label{sec:notation}

For a given domain $\Omega\subset\R^d$, let $L_2^n[\Omega]$ and $L_{\infty}^{n}[\Omega]$ denote the set of all $\R^n$-valued square-integrable and bounded functions on $\Omega$, respectively, where we omit the domain when clear from context. For $k\in\N$, define Sobolev subspaces $H_{k}^n[\Omega]$ of $L_2^{n}[\Omega]$ for $\Omega\subseteq\R$ as
\begin{align*}
	&H_{k}^{n}[\Omega]=\bl\{\mbf{v}\in L_2^{n}[\Omega] \mid \partial_s^{\alpha}\mbf{v}\!\in\! L_2^n[\Omega],\ \forall \alpha \in\N: \alpha\!\leq k\br\},
\end{align*}
where we write $\partial_{s}^{\alpha}\mbf{v}=\frac{\partial^{\alpha}}{\partial s^{\alpha}}\mbf{v}$, and where we similarly omit the domain when clear from context.

\subsection{Objectives and Approach}

We consider the problem of representation of nonlinear PDEs and parametrization of quadratic (and non-quadratic) forms on a distributed state-space. In particular, suppose we are given a scalar 2nd-order quadratic PDE of the form
\begin{equation}\label{eq:PDE_nonlinear_intro}
	\tbf{PDE:}\qquad u_t(t,s)=c(s)^T {\srbmat{u\\u_s\\u_{ss}\\u^2\\u u_s\\ u_s^2}}(t,s),	\qquad	s\in[a,b],
\end{equation}
with linear boundary conditions of the form
\begin{equation}\label{eq:BCs_intro}
	u(t)\in X_{B}[a,b]:=\bbbl\{\mbf{u}\in H_{2}[a,b]~\bbr\rvert~ B \smallbmat{\mbf{u}(a)\\\mbf{u}_{s}(a)\\\mbf{u}(b)\\\mbf{u}_{s}(b)}=0\bbbr\},
\end{equation}
parameterized by $\{c,B\}\in L_{\infty}^{6}[a,b]\times\R^{2\times 4}$.

The goal of this paper is to create a unified representation of this class of PDEs (extending to higher-order PDEs) and to verify the existence of quadratic Lyapunov functionals which prove stability of such systems. To create such a unified representation and parametrization, in Section~\ref{sec:PDE2PIE}, we will first use the BCs to express the distributed state, $u \in X_B$, in terms of its highest order derivative, $\mbf{v}:= u_{ss} \in L_2$ -- constructing the linear map, $\mcl{T} :\mbf{v} \mapsto u$ so that $u=\mcl{T}\partial_{s}^2 u$ and $\mbf{v}=\partial_s^2 \mcl{T} \mbf{v}$. This mapping is referred to as the Partial Integral (PI) transformation, and has already been studied extensively in the context of linear PDEs~\cite{shivakumar2022GPDE_Arxiv}.

Equipped with this PI transformation, we will show in Section~\ref{sec:tensor_PIs} that any such quadratic PDE (including higher-order PDEs) can be represented as
\begin{align}\label{eq:PIE_quadratic_intro}
	\partial_{t}\mcl{T}\mbf{v}(t)&=\mcl{A}\mbf{v}(t) +\mcl{B}[\mbf{v}(t)\otimes\mbf{v}(t)].
\end{align}
where $[\mbf{v}(t) \otimes \mbf{v}(t)](\theta,\eta)=\mbf{v}(t,\theta) \mbf{v}(t,\eta)$ is the tensor product of $\mbf{v}(t)\in L_2[a,b]$ with itself and defines a distributed monomial basis on $L_2[a,b]$. Although the focus of this paper is on quadratic PDEs, this representation can also be extended to cubic or higher degree PDEs by inclusion of higher degree monomials such as $\mbf{v}(t)\otimes\mbf{v}(t)\otimes\mbf{v}(t)$. The operator $\mcl{A}$, meanwhile, is a PI operator, which for $\mbf{u}\in L_2[a,b]$ takes the form
\begin{equation*}\Resize{\linewidth}{
		(\mcl{A}\mbf{u})(s)=A_{0}(s)\mbf{u}(s)+\!\!\int_{a}^{s}\!\!\! A_1(s,\theta)\mbf{u}(\theta) d\theta	+\!\!\int_{s}^{b}\!\!\! A_2(s,\theta)\mbf{u}(\theta) d\theta.}
\end{equation*}
Accordingly, the operator $\mcl B$ is a tensor product of PI operators, which for $\mbf{w}\in L_2[[a,b]^2]$ takes the form
\begin{align*}
	(\mcl{B}\mbf{w})(s)&=\int_{a}^{s}\!\!\int_{a}^{\theta} Q_1(s,\theta)R_1(s,\eta)\mbf{w}(\theta,\eta) d\eta d\theta	\\ &\quad+\int_{s}^{b}\!\!\int_{a}^{s}Q_2(s,\theta)R_2(s,\eta)\mbf{w}(\theta,\eta)d\eta d\theta \\
	&\qquad +\int_{s}^{b}\!\!\int_{s}^{\theta}Q_3(s,\theta)R_3(s,\eta)\mbf{w}(\theta,\eta)d\eta d\theta.
\end{align*}
Next, in Section~\ref{sec:stability}, we show that for a quadratic Lyapunov functional of the form $V(\mbf{u})=\ip{\mcl{T}\mbf{u}}{\mcl{P}\mcl{T}\mbf{u}}_{L_2}$ (where $\mcl{P}$ is a PI operator decision variable and $\mbf{u}\in L_2[a,b]$) the derivative of this LF can be represented as
\begin{equation*}\Resize{\linewidth}{ \dot{V}(\mbf{u})=\ip{\srbmat{\mbf{u}\\\mbf{u}\otimes\mbf{u}}}{\bmat{\mcl{A}^*\mcl{P}\mcl{T}+\mcl{T}^*\mcl{P}\mcl{A}&\mcl{T}^*\mcl{P}\mcl{B}\\\mcl{B}^*\mcl{P}\mcl{T}&0}\srbmat{\mbf{u}\\\mbf{u}\otimes\mbf{u}}}_{L_2}.}
\end{equation*}
Exploiting the algebraic closure of our class of tensor products of PI operators, we show how to take a quadratic form such as $\ip{\mbf{u}}{\mcl{T}^*\mcl{P}\mcl{B}[\mbf{u}\otimes\mbf{u}]}_{L_2}$ and convert it to a corresponding linear representation so that
\begin{equation*}
	\ip{\mbf{u}}{\mcl{T}^*\mcl{P}\mcl{B}[\mbf{u}\otimes\mbf{u}]}_{L_2}=\mcl{K}[\mbf{u}\otimes\mbf{u}\otimes\mbf{u}].
\end{equation*}
Finally, to test stability of a PDE, in Theorem~\ref{thm:stability_LPI}, we enforce negativity of $\mcl{A}^*\mcl{P}\mcl{T}+\mcl{T}^*\mcl{P}\mcl{A}$ along with the constraint $\mcl{K}=0$, thus ensuring $\dot{V}(\mbf{u})\leq 0$ for all $\mbf{u}\in L_2$. This approach is applied to verify stability of three commonly encountered scalar quadratic PDEs in Section~\ref{sec:examples}.


\section{A Map From Fundamental State to PDE State}\label{sec:PDE2PIE}

We consider an $N$th order, scalar-valued, quadratic PDE of the form
\begin{align}\label{eq:PDE_nonlinear}
	\partial_{t}\mbf{u}(t,s)&=\sum_{i=0}^{N}\alpha_{i}(s)\partial_{s}^{i}\mbf{u}(t,s)	\\ &\quad +\sum_{i=0}^{N-1}\sum_{j=0}^{i}\beta_{ij}(s)\partial_{s}^{i}\mbf{u}(t,s)\partial_{s}^{j}\mbf{u}(t,s),	\quad s\in[a,b],	\notag
\end{align}
with linear boundary conditions of the general form
\begin{equation}\label{eq:BCs}\Scale{0.95}{
		\mbf{u}(t)\in X_{B}[a,b]\!:=\!\bbbl\{\!\mbf{u}\in H_{N}[a,b]~\bbr\rvert B \bmat{\Delta_{s}^{a}\mscr{D}^{N-1}\mbf{u}\\\Delta_{s}^{b}\mscr{D}^{N-1}\mbf{u} }=0\bbbr\},}
\end{equation}
where we define boundary operators $\Delta_{s}^{a}\mbf{u} = \mbf{u}$ and $\Delta_{s}^{b}\mbf{u} = \mbf{u}(b)$ for arbitrary $\mbf{u}\in H_{1}[a,b]$, and where for $\mbf{u}\in H_{N}$ and $k\leq N$ we define $\mscr{D}^{k}\mbf{u}$ as the vector of all derivatives of $\mbf{u}$ up to $k$th order as
\begin{align*}
	\mscr{D}^{k}\mbf{u}:=\bmat{\mbf{u}&\partial_{s} \mbf{u}&\hdots &\partial_{s}^{k} \mbf{u}}^T.
\end{align*}
We define solutions to the PDE as follows.
\begin{defn}[Classical Solution to the PDE]
	For a given initial state $\mbf{u}_{0}\in X_{B}$, we say that $\mbf{u}$ is a classical solution to the quadratic PDE defined by $\{B,[\alpha,\beta]\}$ if $\mbf{u}$ is Frech\'et differentiable, $\mbf{u}(0)=\mbf{u}_{0}$, and for all $t\geq0$, $\mbf{u}(t)\in X_{B}$, and $\mbf{u}(t)$ satisfies~\eqref{eq:PDE_nonlinear}.
\end{defn}

In this section, we show how a suitably well-posed PDE of the form given in~\eqref{eq:PDE_nonlinear} and defined in terms of spatial derivatives of the variable $\mbf{u}(s,t)$ can be equivalently represented in terms spatial integrals of the variable $\partial_{s}^{N}\mbf{u}(s,t)$. The advantage of such a representation will be that it is more compact, defined only using bounded integral operators, and is free of auxiliary constraints in the form of boundary conditions and continuity constraints (i.e. $\partial_{s}^{N}\mbf{u}(t)\in L_2$ whereas $\mbf{u}(t)\in X_{B}$). To begin, we recall the standard definition of a Partial Integral (PI) operator~\cite{peet2019PIE_representation}.

\begin{defn}[Algebras of PI Operators: $\Pi_{3}$,$\Pi_{2}$]
	For a given domain, $[a,b]$, we define the parameter space
	\begin{align*}
		\mcl{N}_{3}:=L_{\infty}[a,b]\times L_{2}[[a,b]^2]\times L_{2}[[a,b]^2],
	\end{align*}
	and its subspace
	\begin{align*}
		\mcl{N}_{2}:=L_{2}[[a,b]^2]\times L_{2}[[a,b]^2].
	\end{align*}
	Then, for any $R:=\{R_{0},R_{1},R_{2}\}\in\mcl{N}_{3}$, we define the associated operator $\mcl{R}:=\mcl{P}_{\{R_0,R_1,R_2\}}$ for $\mbf{u}\in L_2[a,b]$ as
	\begin{equation*}\Resize{\linewidth}{
			(\mcl{R}\mbf{u})(s)=R_{0}(s)\mbf{u}(s) \!+\!\int_{a}^{s}\!\!\!  R_{1}(s,\theta)\mbf{u}(\theta)d\theta \!+\!\int_{s}^{b}\!\!\! R_{2}(s,\theta)\mbf{u}(\theta)d\theta.}
	\end{equation*}
	We say $\mcl{R}\in \Pi_3$ if $\mcl{R} =\mcl P_{\{R_0,R_1,R_2\}}$ for some $\{R_0,R_1,R_2\} \in \mcl N_3$ and $\mcl{R}\in \Pi_2$ if $\mcl{R} =\mcl P_{\{0,R_1,R_2\}}$ for some $\{R_1,R_2\} \in \mcl N_2$. For convenience, we say that $\mcl R$ is a 3-PI operator if $\mcl R \in \Pi_3$ and a 2-PI operator if $\mcl R \in \Pi_2$.
\end{defn}
\smallskip

Defining the 3-PI and 2-PI operators in this manner, it has been shown that both $\Pi_{2}$ and $\Pi_{3}$ (and by extension $\mcl N_2,\mcl N_3$) form *-algebras, being closed under summation, composition, scalar multiplication, and adjoint with respect to $L_2$. We refer to~\cite{shivakumar2022GPDE_Arxiv} for explicit parameter maps defining these operations.

Consider now solutions $\mbf{u}$ of an $N$th order quadratic PDE of the form in~\eqref{eq:PDE_nonlinear}. Since, at any time $t\geq 0$, $\mbf{u}(t)\in X_{B} \subset H_N$ is only $N$th-order spatially differentiable, the $N$th-order spatial derivative of this state need only satisfy $\partial_{s}^{N}\mbf{u}(t)\in L_2[a,b]$ and hence is free of any continuity constraints or boundary conditions. We refer to this derivative $\mbf{v}(t):=\partial_{s}^{N}\mbf{u}(t)$ as the \textit{fundamental state} associated to the PDE, defining a map from the PDE state space $X_{B}$ to the fundamental state space $L_2$ by the differential operator $\partial_{s}^{N}:X_{B}\to L_2$. Then, assuming the boundary conditions to be sufficiently well-posed, we can define an inverse map $\mcl{T}:L_2\to X_{B}$ by a 2-PI operator $\mcl{T}$. In particular, we recall the following result from e.g.~\cite{shivakumar2022GPDE_Arxiv}.
\begin{lem}\label{lem:Tmap}
	Let $X_B$ be as defined in Eqn.~\eqref{eq:BCs} for some $B\in\R^{N\times 2N}$ which satisfies the well-posedness conditions as given in Defn.~9 in~\cite{shivakumar2022GPDE_Arxiv}. If $\mcl{T}\in\Pi_{2}$ is as defined in~\cite{shivakumar2022GPDE_Arxiv}, then for all $\mbf{u}\in X_{B}$ and $\mbf{v}\in L_2$,
	\begin{align*}
		\mbf{u}&=\mcl{T}[\partial_{s}^{N}\mbf{u}],	&	&\text{and}	&
		\mbf{v}&=\partial_{s}^{N}[\mcl{T}\mbf{v}].
	\end{align*}
	More generally, if $\mcl{R}_{j}:=\partial_{s}^{j}\circ\mcl{T}\in\Pi_{2}$ for $j\in\{0,\hdots,N-1\}$, then for all $\mbf{u}\in X_{B}$ and $\mbf{v}\in L_2$,
	\begin{align*}
		\partial_{s}^{j}\mbf{u}&=\mcl{R}_{j}[\partial_{s}^{N}\mbf{u}],	&	&\text{and}	&
		\mcl{R}_{j}\mbf{v}&=\partial_{s}^{j}[\mcl{T}\mbf{v}].
	\end{align*}
\end{lem}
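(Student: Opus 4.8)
The plan is to prove Lemma~\ref{lem:Tmap} by reducing it to the known linear-PDE result on which it is based, and then bootstrapping from the two identities for $j=N$ (i.e. $\mbf{u}=\mcl{T}[\partial_s^N\mbf{u}]$ and $\mbf{v}=\partial_s^N[\mcl{T}\mbf{v}]$) to the general statement for $0\leq j\leq N-1$. First I would recall the construction of $\mcl{T}$ from~\cite{shivakumar2022GPDE_Arxiv}: the membership $\mbf{u}\in X_B\subset H_N$ together with the fundamental theorem of calculus lets one write $\mbf{u}$ (and each lower derivative $\partial_s^j\mbf{u}$) as an iterated integral of $\partial_s^N\mbf{u}$ plus a polynomial in $s$ whose coefficients are the boundary values $\Delta_s^a\mscr{D}^{N-1}\mbf{u}$. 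Imposing the boundary constraint $B\smallbmat{\Delta_s^a\mscr{D}^{N-1}\mbf{u}\\\Delta_s^b\mscr{D}^{N-1}\mbf{u}}=0$ gives a linear system for these $2N$ boundary values in terms of $\partial_s^N\mbf{u}$; the well-posedness condition (Defn.~9 in~\cite{shivakumar2022GPDE_Arxiv}) is exactly what guarantees this system is uniquely solvable, so the boundary values are recovered as bounded linear (indeed PI-type) functionals of $\mbf{v}=\partial_s^N\mbf{u}$. Substituting back yields $\mbf{u}=\mcl{T}\mbf{v}$ with $\mcl{T}\in\Pi_2$, which is the first half of the first display.

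Next I would verify the two "round-trip" identities. For $\mbf{u}\in X_B$, applying $\partial_s^N$ to $\mcl{T}[\partial_s^N\mbf{u}]$ and using that $\partial_s^N$ annihilates the polynomial (boundary-value) part and inverts the $N$-fold integral in the kernel of $\mcl{T}$ gives $\partial_s^N[\mcl{T}\partial_s^N\mbf{u}]=\partial_s^N\mbf{u}$; since $\partial_s^N$ is injective on $X_B$ (two elements of $X_B$ with the same $N$th derivative differ by a polynomial of degree $<N$ satisfying the homogeneous boundary constraints, which well-posedness forces to be zero), this upgrades to $\mcl{T}[\partial_s^N\mbf{u}]=\mbf{u}$. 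Conversely, for arbitrary $\mbf{v}\in L_2$, one checks directly from the integral form of $\mcl{T}$ that $\mcl{T}\mbf{v}\in H_N$, that it satisfies the boundary constraints (this is how the boundary-value functionals were chosen), hence $\mcl{T}\mbf{v}\in X_B$, and that $\partial_s^N[\mcl{T}\mbf{v}]=\mbf{v}$ by the same differentiation-inverts-integration computation. This establishes both displays in the first part.

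For the "more generally" part, the key observation is that $\partial_s^j\circ\mcl{T}$ simply differentiates the explicit integral representation of $\mcl{T}\mbf{v}$ $j$ times; since $j\leq N-1$ and $\mcl{T}\mbf{v}\in H_N$, each such derivative is still an $L_2$ function given by a partial-integral expression (the $j$-fold derivative reduces the $N$-fold integral to an $(N-j)$-fold integral and differentiates the polynomial part), so $\mcl{R}_j:=\partial_s^j\circ\mcl{T}\in\Pi_2$ — this is the claim one takes from~\cite{shivakumar2022GPDE_Arxiv} rather than recomputes. Given that, the two identities follow immediately: for $\mbf{u}\in X_B$ write $\mbf{u}=\mcl{T}[\partial_s^N\mbf{u}]$ and apply $\partial_s^j$ to both sides to get $\partial_s^j\mbf{u}=\partial_s^j\mcl{T}[\partial_s^N\mbf{u}]=\mcl{R}_j[\partial_s^N\mbf{u}]$; and for $\mbf{v}\in L_2$, $\mcl{R}_j\mbf{v}=\partial_s^j[\mcl{T}\mbf{v}]$ is just the definition of $\mcl{R}_j$.

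I expect the main obstacle — or rather, the one genuinely substantive point — to be the justification that $\mcl{T}$ (and each $\mcl{R}_j$) actually lands in the algebra $\Pi_2$, i.e. that the boundary-value functionals extracted via the well-posedness condition have PI-operator form with the right kernels in $L_2[[a,b]^2]$ (and that $\mcl{T}$ has no multiplier term $R_0$). Everything else is bookkeeping with the fundamental theorem of calculus and the injectivity of $\partial_s^N$ on $X_B$. Since the excerpt explicitly defers the construction of $\mcl{T}$ and the well-posedness notion to~\cite{shivakumar2022GPDE_Arxiv}, I would cite that reference for the $\Pi_2$-membership and the invertibility of the boundary system, and present the round-trip identities and the differentiation argument in the above order as the content of the proof.
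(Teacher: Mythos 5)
Your proposal is correct and consistent with the paper, which simply defers this entire lemma to Thm.~10 and Thm.~12 of the cited reference~\cite{shivakumar2022GPDE_Arxiv} without giving any argument of its own. Your sketch (Taylor expansion with integral remainder, solving the boundary system under the well-posedness condition, the round-trip identities via injectivity of $\partial_s^N$ on $X_B$, and differentiating $\mbf{u}=\mcl{T}[\partial_s^N\mbf{u}]$ for the $\mcl{R}_j$ claims) is a faithful reconstruction of how that reference proves the result, and you correctly identify the $\Pi_2$-membership of the boundary-value functionals as the one point that genuinely must be taken from the citation.
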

\smallskip
\begin{proof}
	We refer to Thm.~10 and Thm.~12 in\cite{shivakumar2022GPDE_Arxiv} for a proof, as well as for explicit formulae mapping the matrix $B$ to operators $\mcl{T}$ and $\mcl{R}_{j}$.
\end{proof}
\smallskip
Using this lemma, we can express both the PDE state $\mbf{u}(t)$ and its derivatives $\partial_{s}^{j}\mbf{u}(t)$ in terms of the fundamental state $\mbf{v}(t):=\partial_{s}^{N}\mbf{u}(t)$ as $\mbf{u}(t)=\mcl{T}\mbf{v}(t)$ and $\partial_{s}^{j}\mbf{u}(t)=\mcl{R}_{j}\mbf{v}(t)$ for $j\in\{0,\hdots,N-1\}$. Clearly, we can also define an identity operator $\mcl{R}_{N}=I\in\Pi_{3}$, so that $\partial_{s}^{N}\mbf{u}=\mcl{R}_{N}\mbf{v}$. Substituting these relations into the PDE~\eqref{eq:PDE_nonlinear}, we find that if $\mbf{u}$ satisfies the PDE, then $\mbf v$ satisfies
\begin{align}\label{eq:PIE_intermediate}
	\partial_{t}(\mcl{T}\mbf{v})(t,s)&=\sum_{i=0}^{N}\alpha_{i}(s)(\mcl{R}_{i}\mbf{v})(t,s)	\\[-0.3em] &\qquad +\sum_{i=0}^{N-1}\sum_{j=0}^{i}\beta_{ij}(s)(\mcl{R}_{i}\mbf{v})(t,s)(\mcl{R}_{j}\mbf{v})(t,s),		\notag
\end{align}
where $\mbf{v}(t)\in L_2[a,b]$ at each $t\geq 0$ is free of boundary conditions and continuity constraints. Moreover, in this representation, the dynamics are now expressed as a quadratic function of only the fundamental state $\mbf{v}(t)$ and PI operators applied to this state. In the following section, we show how this quadratic function can be expressed more compactly in a linear format $\mcl{C}Z_{2}(\mbf{v})$, by defining a suitable basis of monomials $Z_{2}(\mbf{v})$ in the state $\mbf{v}(t)$, and an associated class of operators $\mcl{C}$ acting on this monomial basis.

\paragraph*{\tbf{Example}}
Suppose that $u(t,s)$ for $s\in[0,1]$ satisfies Burgers' equation with Dirichlet boundary conditions, so that
\begin{align}\label{eq:Burgers_PDE}
	\textbf{PDE:}\qquad &u_{t}(t,s)= \nu u_{ss}(t,s) - u(t,s) u_{s}(t,s),	\\
	\textbf{BCs:}\qquad &u(t,0)=0,\qquad u(t,1)=0.	\notag
\end{align}
For any $\mbf{u}\in L_2[0,1]$, let $\mcl{T},\mcl{R}\in\Pi_{2}$ be defined as
\begin{align}\label{eq:Burgers_Top_Rop}
	\bl(\mcl{T}\mbf{u}\br)(s)&:=\int_{0}^{s}[s-1]\theta\mbf{u}(\theta)d\theta + \int_{s}^{1}s[\theta-1]\mbf{u}(\theta)d\theta,	\notag\\
	\bl(\mcl{R}\mbf{u}\br)(s)&:=\int_{0}^{s}\theta\mbf{u}(\theta)d\theta + \int_{s}^{1}[\theta-1]\mbf{u}(\theta)d\theta.
\end{align}
Now, if we define the fundamental state as $\mbf{v}(t):=u_{ss}(t)\in L_2[0,1]$, then $u(t)=\mcl{T}\mbf{v}(t)$ and $u_{s}(t)=\mcl{R}\mbf{v}(t)$. Furthermore, $\mbf{v}$ satisfies
\begin{align}\label{eq:Burgers_PIE_1}
	\partial_{t}\mcl{T}\mbf{v}(t) 
	=\nu\mbf{v}(t) -(\mcl{T}\mbf{v}(t))(\mcl{R}\mbf{v}(t)).
\end{align}
Conversely, if $\mbf{v}(t)$ satisfies~\eqref{eq:Burgers_PIE_1} and $u(t)=\mcl{T}\mbf{v}(t)$, then $u(t,s)$ satisfies Eqn.~\eqref{eq:Burgers_PDE} for all $s\in[a,b]$.

\section{Polynomial Representation on a Distributed State}

In the previous section, we showed how a quadratic, 2nd-order PDE of form as in~\eqref{eq:PDE_nonlinear_intro}, defined using state $u(t)$, can be equivalently represented in terms of Eqn.~\eqref{eq:PIE_intermediate}, defined using the `fundamental' state $\mbf v(t)=u_{ss}(t)$. However, the expression for the dynamics of $\mbf{v}$ in Eqn.~\eqref{eq:PIE_intermediate} is not suitable for the purpose of computational stability analysis -- being defined using terms such as $(\mcl{T}\mbf{v})(\mcl{R}\mbf{v})$.

To illustrate the difficulty posed by terms such as $(\mcl{T}\mbf{v})(\mcl{R}\mbf{v})$ in testing stability of systems as in~\eqref{eq:PIE_intermediate}, let us briefly recall the problem of computational stability analysis of nonlinear ordinary differential equations (ODEs). When the vector field, $f$, is polynomial of degree $d$, the ODE may be uniquely represented as $\dot x=f(x)=A Z_d(x)$, where $Z_d(x)$ is the vector of monomials in state $x \in \R^n$ of degree $d$ or less. Additionally, positive quadratic Lyapunov functions may be parameterized as $V(x)=\frac{1}{2}x^TPx$, where $P\ge 0$ is a positive matrix variable. The time-derivative of this Lyapunov function is then $\dot V(x)=x^T P A Z_d(x)$ for which we can enforce negativity using the equality constraint
\begin{equation}\label{eq:eq_constraint}
	x^T P A Z_d(x)=-Z_{q}(x)^TQZ_q(x),
\end{equation}
where $Q$ is likewise a positive matrix variable, and $q=\frac{d+1}{2}$. However, enforcing this constraint is made problematic due to the fact that expressions of the form $x^T P A Z_d(x)$ and $Z_q(x)^TQZ_q(x)$ are \textit{not} unique -- i.e. there exist $Q \neq 0$ such that $Z_{q}(x)^T Q Z_q(x)=0$. Therefore, in order to enforce the constraint in~\eqref{eq:eq_constraint}, we must first convert this expression to the linear representation $C Z_{d+1}(x)=0$, which \textit{is} unique.

Returning now to the distributed-state system in~\eqref{eq:PIE_intermediate}, we likewise want to derive a linear representation $\mcl{C}Z_{d}(\mbf{v})$ of terms such as $(\mcl{T}\mbf{v})(\mcl{R}\mbf{v})$. In order to define this linear representation, however, we first have to define a suitable basis of monomials $Z_{d}(\mbf{v})$ on distributed states $\mbf{v}\in L_2$, which we do in the following subsection. In the next subsection, we then define a suitable class of operators $\Pi_{2^{2}}$ and a map $\mcl{T}\times\mcl{R}\mapsto\mcl{C}\in\Pi_{2^2}$ so that $(\mcl{T}\mbf{v})(\mcl{R}\mbf{v})=\mcl{C}Z_{2}(\mbf{v})$, allowing us to express each of the quadratic terms in~\eqref{eq:PIE_intermediate} in a linear format.

\subsection{Monomial Basis on a Distributed State}\label{sec:polynomials}

In order to define a suitable monomial basis for polynomials on a distributed state $\mbf{v}\in L_2[a,b]$, let us first consider what such a basis looks like for a discretized state $v=\bmat{v_1&\hdots&v_{n}}^T=\bmat{\mbf{v}(s_1)&\hdots&\mbf{v}(s_n)}^T\in\R^{n}$. For this vector $v\in\R^{n}$, the basis of monomials $Z_{2}(v)$ of degree 2 in $v$ consists of each of the independent variables $v_{i}$, as well as any product $v_{i}v_{j}$ of these variables for $i,j\in\{1,\hdots,n\}$ -- i.e. the unique terms in the Kronecker product $v \otimes v$. Extending the vector $v\in\R^{n}$ to a function $\mbf{v}\in L_2[a,b]$, we now have that $\mbf{v}(s_i)$ and $\mbf{v}(s_j)$ are independent variables for any $s_i,s_j\in [a,b]$. Thus whereas in finite dimensions, the independent variables are indexed by $i\in\{1,\hdots,n\}$, for functions on $L_2[a,b]$, every point $s\in[a,b]$ defines an independent variable. Accordingly, the degree one monomials in infinite dimensions would simply be the function $\mbf{v}(s)$, where we have made the continuum extension $v_i\mapsto \mbf{v}(s)$. Likewise, for the monomials of degree 2, the products $v_{i}v_{j}$ may be naturally extended to infinite dimensions as $\mbf{v}(s)\mbf{v}(\theta)$, for $s,\theta\in[a,b]$. We may denote these monomials compactly using the tensor product $(\mbf{v}\otimes\mbf{v})(s,\theta)=\mbf{v}(s)\mbf{v}(\theta)$, so that this tensor product $\mbf{v}\otimes \mbf{v} \in L_2[[a,b]^2]$ is the functional equivalent of the homogeneous degree 2 monomial basis.
Generalizing this to arbitrary degrees, the functional equivalent of the homogeneous degree $d\in\N$ monomial basis becomes $\overbrace{\mbf{v}\otimes\cdots\otimes \mbf{v}}^{d \text{ factors}}\in L_{2}[[a,b]^{d}]$, and any distributed polynomial of degree $d$ in $\mbf{v}$ may be expressed in terms of the basis
\begin{equation*}
	Z_{d}(\mbf{v})=\slbmat{\mbf{v}\\\mbf{v}\otimes\mbf{v}\\\vdots\\[-0.6em]\overbrace{\mbf{v}\otimes\cdots\otimes \mbf{v}}^{d \text{ factors}}}.
\end{equation*}
We refer to this basis as the degree $d$ \textit{distributed monomial} basis in $\mbf{v}$. In the following subsection, we show how we may represent quadratic polynomials $(\mcl{T}\mbf{v})(\mcl{R}\mbf{v})$ in a format $\mcl{Q}(\mbf{v}\otimes\mbf{v})$ that is linear in the degree $2$ distributed monomial.

\subsection{Tensor Products of 2-PI Operators}

Having defined a monomial basis for distributed states in terms of tensor products of this state, we now show how we may similarly define a tensor product of 2-PI operators, so that we may express e.g. $(\mcl{T}\mbf{v})(\mcl{R}\mbf{v})$ in the linear format $(\mcl{T}\otimes\mcl{R})(\mbf{v}\otimes\mbf{v})$. To motivate this class of operators, let us again recall the linear representation of scalar polynomials on a finite dimensional state, $x \in \R^n$. In particular, suppose we are given two such polynomials, $t(x)=c_t^T Z_d(x)$ and $r(x)=c_r^T Z_d(x)$, and want to construct a linear representation of $t(x)r(x)$. Such a linear representation is readily computed using the Kronecker (i.e. tensor) product of $c_t$ and $c_r$ as
\[
t(x)r(x)=(c_t \otimes c_r)(Z_d(x)\otimes Z_d(x)).
\]

The goal of this subsection is to define the distributed equivalent of this formula, where now $t$ and $r$ are the distributed polynomials (albeit linear) $\mcl T\mbf v$ and $\mcl R\mbf v$, and where the 2-PI operators $\mcl T$ and $\mcl R$ are the distributed equivalent of the vectors $c_r$ and $c_t$. Specifically, we define a space $\Pi_{2^2}$, consisting of the tensor product of 2-PI operators so that $\mcl{Q}=\mcl T \otimes \mcl R$ for some $\mcl T,\mcl R \in \Pi_2$ implies $\mcl{Q} \in \Pi_{2^2}$.

To illustrate the notion of tensor product for 2-PI operators, consider $\mcl{T}=\mcl{P}_{\{0,T_1,T_2\}}\in\Pi_{2}$ and $\mcl{R}=\mcl{P}_{\{0,R_1,R_2\}}\in\Pi_{2}$. Then we can express the product $(\mcl{T}\mbf{v})(\mcl{R}\mbf{v})$ in terms of $[\mbf{v}\otimes\mbf{v}](\theta,\eta)=\mbf{v}(\theta)\mbf{v}(\eta)$ as\\[-1.6em]
\begin{align}\label{eq:ToR_example}
	&(\mcl{T}\mbf{v})(s)(\mcl{R}\mbf{v})(s)
	=\int_{a}^{s}\!\!\int_{a}^{s}\!\! T_{1}(s,\theta)R_{1}(s,\eta)\mbf{v}(\theta)\mbf{v}(\eta)d\eta d\theta	\notag\\
	&\hspace*{0.1cm}+\int_{s}^{b}\!\!\int_{a}^{s}\!\! T_{2}(s,\theta)R_{1}(s,\eta)\mbf{v}(\theta)\mbf{v}(\eta)d\eta d\theta	\\
	&\hspace*{0.2cm}+\int_{a}^{s}\!\!\int_{s}^{b}\!\! T_{1}(s,\theta)R_{2}(s,\eta)\mbf{v}(\theta)\mbf{v}(\eta)d\eta d\theta	\notag\\
	&\hspace*{0.4cm}+\int_{s}^{b}\!\!\int_{s}^{b}\!\! T_{2}(s,\theta)R_{2}(s,\eta)\mbf{v}(\theta)\mbf{v}(\eta)d\eta d\theta
	=\bl(\mcl{Q}[\mbf{v}\!\otimes\!\mbf{v}]\br)(s),		\notag
\end{align}

\noindent where $\mcl{Q}$ is again a partial integral operator.
Clearly, however, this new PI operator $\mcl{Q} \in \Pi_{2^2}$ (defined on $\mbf{v}\otimes\mbf{v} \in L_2[[a,b]^2]$) has a structure which is distinct from that in $\Pi_2$ and $\Pi_3$.
Instead, $\mcl{Q}$ belongs to a class of ``tensor product PI operators'', which we define as follows.

\begin{defn}[Class of Tensor Product PI Operators, $\Pi_{2^2}$]\label{defn:tensorPI}
	For a given domain, $[a,b]$, we define the parameter space
	\begin{align*}
		\mcl{N}_{2^2}[a,b]=L_{2}[[a,b]^3]\times L_{2}[[a,b]^3]\times L_{2}[[a,b]^3].
	\end{align*}
	Then, for any $Q:=[Q_{1},Q_{2},Q_{3}]\in\mcl{N}_{2^2}$, we define the associated PI operator $\mcl{Q}:=\mcl{P}[Q]$ for $\mbf{w}\in L_2[[a,b]^2]$ as \\[-1.4em]
	{\small
		\begin{align*}
			&(\mcl{Q}\mbf{w})(s)=\int_{a}^{s}\!\!\int_{a}^{\theta}\!\! Q_{1}(s,\theta,\eta)\mbf{w}(\theta,\eta) d\eta d\theta	\\ &+\!\int_{s}^{b}\!\!\int_{a}^{s}\!\!Q_{2}(s,\theta,\eta)\mbf{w}(\theta,\eta)d\eta d\theta \!+\!\int_{s}^{b}\!\!\int_{s}^{\theta}\!\!Q_{3}(s,\theta,\eta)\mbf{w}(\theta,\eta)d\eta d\theta.
	\end{align*}}
	\noindent	Finally, we say that $\mcl{Q}\in \Pi_{2^2}$ if $\mcl{Q} =\mcl P[Q]$ for some $Q:=[Q_{1},Q_{2},Q_{3}]\in\mcl{N}_{2^2}$.
\end{defn}
\smallskip

We remark that the structure in this definition is slightly different from that of the operator $\mcl{Q}$ defining the product $\mcl{Q}[\mbf{v}\otimes\mbf{v}]=(\mcl{T}\mbf{v})(\mcl{R}\mbf{v})$ in~\eqref{eq:ToR_example}. However, we can convert the representation in~\eqref{eq:ToR_example} to one of the form in Defn.~\ref{defn:tensorPI}, defining a map $\mcl{T} \otimes \mcl{R} \rightarrow \mcl{Q} \in \Pi_{2^2}$ such that $(\mcl{T}\mbf{v})(\mcl{R}\mbf{v})=\mcl{Q}[\mbf{v}\otimes\mbf{v}]$.

\begin{defn}[Tensor Product of PI Operators: $\mcl{T} \otimes \mcl{R}$]
	For any $\mcl T=\mcl P_{\{T_{i}\}}\in\Pi_{2}$ and $\mcl R=\mcl P_{\{R_i\}}\in\Pi_{2}$ where $T:=\{T_{1},T_{2}\}\in\mcl{N}_{2}$ and $R:=\{R_{1},R_{2}\}\in\mcl{N}_{2}$, we say that $\mcl{Q}=\mcl{T}\otimes\mcl{R}$ if $\mcl{Q}=\mcl{P}[Q]$, where
	$Q:=[Q_{1},Q_{2},Q_{3}]\in\mcl{N}_{2^2}$ is such that for all $s,\theta,\eta\in[a,b]$, \\[-1.6em]
	\begin{align*}
		Q_{1}(s,\theta,\eta)&:=T_{1}(s,\theta)R_{1}(s,\eta) +T_{1}(s,\eta)R_{1}(s,\theta),	\\
		Q_{2}(s,\theta,\eta)&:=T_{2}(s,\theta)R_{1}(s,\eta) +T_{1}(s,\eta)R_{2}(s,\theta),	\\
		Q_{3}(s,\theta,\eta)&:=T_{2}(s,\theta)R_{2}(s,\eta) +T_{2}(s,\eta)R_{2}(s,\theta).
	\end{align*}
\end{defn}
\smallskip

\begin{prop}\label{prop:tensor_prod_PIs}
	For any $\mcl{T},\mcl{R}\in \Pi_2$ and $\mbf{v} \in L_2[a,b]$, we have
	$
	(\mcl{T}\mbf{v})(\mcl{R}\mbf{v})=\bl((\mcl{T}\otimes\mcl{R})[\mbf{v}\otimes\mbf{v}]\br).
	$
\end{prop}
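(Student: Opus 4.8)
The plan is to verify the identity by direct computation, expanding the left-hand side into four integral terms using the definitions of $\mcl{T}$ and $\mcl{R}$ as 2-PI operators, and then showing that these four terms reassemble into the three-term form $\mcl{P}[Q]$ prescribed in Definition~\ref{defn:tensorPI} with $Q = [Q_1,Q_2,Q_3]$ as given in the definition of $\mcl{T}\otimes\mcl{R}$. The starting point is exactly Eqn.~\eqref{eq:ToR_example}, which already carries out the first half of this expansion: writing $\mcl{T} = \mcl{P}_{\{0,T_1,T_2\}}$ and $\mcl{R} = \mcl{P}_{\{0,R_1,R_2\}}$, the pointwise product $(\mcl{T}\mbf{v})(s)(\mcl{R}\mbf{v})(s)$ is a product of two single integrals, hence a double integral over the square $[a,b]^2$ in the dummy variables $(\theta,\eta)$, which splits according to whether each of $\theta,\eta$ lies below or above $s$, giving the four pieces over $[a,s]\times[a,s]$, $[s,b]\times[a,s]$, $[a,s]\times[s,b]$, and $[s,b]\times[s,b]$ with integrands $T_1 R_1$, $T_2 R_1$, $T_1 R_2$, and $T_2 R_2$ respectively (all evaluated at the appropriate arguments) times $\mbf{v}(\theta)\mbf{v}(\eta)$.

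The key maneuver is then to collapse these four regions onto the three triangular regions used in Definition~\ref{defn:tensorPI}, namely $\{a\le\eta\le\theta\le s\}$, $\{a\le\eta\le s\le\theta\le b\}$ (equivalently the product region $[s,b]\times[a,s]$), and $\{s\le\eta\le\theta\le b\}$. For the first square $[a,s]\times[a,s]$, I would split it along the diagonal into the part with $\eta\le\theta$ and the part with $\theta\le\eta$; in the latter, swap the names of the dummy variables $\theta\leftrightarrow\eta$, so that the integrand $T_1(s,\theta)R_1(s,\eta)$ becomes $T_1(s,\eta)R_1(s,\theta)$ while $\mbf{v}(\theta)\mbf{v}(\eta)$ is unchanged by commutativity; adding the two halves gives the single triangular integral over $\{a\le\eta\le\theta\le s\}$ with kernel $Q_1(s,\theta,\eta) = T_1(s,\theta)R_1(s,\eta) + T_1(s,\eta)R_1(s,\theta)$. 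The same symmetrization applied to the fourth square $[s,b]\times[s,b]$ produces the triangular integral over $\{s\le\eta\le\theta\le b\}$ with kernel $Q_3$. For the two remaining rectangular pieces, $\int_s^b\!\int_a^s T_2(s,\theta)R_1(s,\eta)\mbf{v}(\theta)\mbf{v}(\eta)\,d\eta\,d\theta$ and $\int_a^s\!\int_s^b T_1(s,\theta)R_2(s,\eta)\mbf{v}(\theta)\mbf{v}(\eta)\,d\eta\,d\theta$, I would rename $\theta\leftrightarrow\eta$ in the second so both become integrals over $\theta\in[s,b]$, $\eta\in[a,s]$; summing them gives the kernel $Q_2(s,\theta,\eta) = T_2(s,\theta)R_1(s,\eta) + T_1(s,\eta)R_2(s,\theta)$ over the region matching $\int_s^b\!\int_a^s$ in Definition~\ref{defn:tensorPI}. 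Comparing with that definition shows the sum is exactly $(\mcl{Q}[\mbf{v}\otimes\mbf{v}])(s)$ with $\mcl{Q} = \mcl{T}\otimes\mcl{R}$.

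I would close by noting that all the integral manipulations — Fubini to write the product of integrals as an iterated double integral, splitting domains, and the change-of-variables renaming the dummy variables — are legitimate because $\mbf{v}\in L_2[a,b]$ and the kernels $T_i,R_i$ lie in $L_2[[a,b]^2]$, so the integrands are integrable on each region and $\mcl{Q}[\mbf{v}\otimes\mbf{v}]\in L_2[a,b]$; this also confirms $\mcl{Q}\in\Pi_{2^2}$ since the three kernels $Q_1,Q_2,Q_3$ built from products $T_i(s,\cdot)R_j(s,\cdot)$ lie in $L_2[[a,b]^3]$. The only subtlety — and the one place where a little care is needed rather than pure bookkeeping — is getting the diagonal-splitting and variable-swap bookkeeping exactly right so that the symmetrized kernels come out matching the asserted $Q_1,Q_2,Q_3$; there is no genuine analytic obstacle, only the combinatorial one of tracking which of $\theta,\eta$ is the ``inner'' variable in each triangular region and ensuring the two contributions to each $Q_k$ are correctly paired. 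Everything else is a routine consequence of the commutativity $\mbf{v}(\theta)\mbf{v}(\eta) = \mbf{v}(\eta)\mbf{v}(\theta)$ and the additivity of integration over a partition of the square.
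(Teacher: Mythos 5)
Your proposal is correct and follows essentially the same route as the paper's proof in Appendix~A: expand the product into four double integrals over the quadrants of $[a,b]^2$ relative to $s$, split the two square regions along the diagonal and swap the dummy variables (the paper's identities in Eqn.~\eqref{eq:integral_identities}) to fold everything onto the three triangular/rectangular regions of Definition~\ref{defn:tensorPI}, and read off the symmetrized kernels $Q_1,Q_2,Q_3$. The bookkeeping you describe for each region matches the paper's computation exactly, so nothing further is needed.
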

\begin{proof}
	The proof follows by splliting e.g.
	{\small
		\begin{align*}
			&(\mcl{P}_{\!\{T_{1},0\}}\!\mbf{v})(s)(\mcl{P}_{\!\{R_{1},0\}}\!\mbf{v})(s)
			=\!\int_{a}^{s}\!\!\!\int_{a}^{s}\!\!T_{1}(s,\theta)R_{1}(s,\eta)\mbf{v}(\theta)\mbf{v}(\eta)d\eta d\theta	\\
			&\hspace*{1.0cm} =\int_{a}^{s}\!\!\int_{a}^{\theta}\!\!T_{1}(s,\theta)R_{1}(s,\eta)\mbf{v}(\theta)\mbf{v}(\eta)d\eta d\theta	\\ &\hspace*{2.5cm}+\int_{a}^{s}\!\!\int_{\theta}^{s}\!\!T_{1}(s,\theta)R_{1}(s,\eta)\mbf{v}(\theta)\mbf{v}(\eta)d\eta d\theta,
	\end{align*}}
	and invoking the identity
	\begin{equation}\label{eq:identity_proof}\Scale{0.95}{
			\int_{a}^{s}\!\!\int_{\theta}^{s}\! F(s,\theta,\eta)d\eta d\theta
			=\!\int_{a}^{s}\!\!\int_{a}^{\theta}\! F(s,\eta,\theta)d\eta d\theta.}
	\end{equation}
	A full proof is given in Appendix~\ref{appx:proof_tensor_prod_PIs}.
\end{proof}

We remark that, just as we can use higher degree tensor products to define higher-degree distributed monomials, $(\mbf{v}\otimes\cdots\otimes\mbf{v})(\theta_{1},\hdots,\theta_{d})=\mbf{v}(\theta_{1})\cdots\mbf{v}(\theta_{d})$, we may also define higher degree tensor products of 2-PI operators to map these distributed monomials, so that $(\mcl{R}_{1}\mbf{v})\cdots(\mcl{R}_{d}\mbf{v})=(\mcl{R}_{1}\otimes\cdots\otimes\mcl{R}_{d})(\mbf{v}\otimes\cdots\otimes \mbf{v})$. Again, this extension is left for subsequent publications.

\medskip

\section{A PIE Representation of Quadratic PDEs}\label{sec:tensor_PIs}

We now return to the quadratic representation of the evolution of $\mbf v:=\partial_{s}^{N}\mbf{u}$ given in~\eqref{eq:PIE_intermediate}, of solutions $\mbf{u}$ of the PDE in~\eqref{eq:PDE_nonlinear}. Using the distributed monomials and tensor products of PI operators defined in the previous section, we can now represent each of the quadratic terms in the PDE in terms of the monomials of $\mbf v$ -- i.e. $\mbf{v}\otimes\mbf{v}$. Specifically, the goal of this subsection is to show that the evolution of $\mbf v:=\partial_{s}^{N}\mbf{u}$ is governed by a quadratic PIE of the form
\begin{equation}\label{eq:PIE_quadratic}
	\tbf{PIE:}\qquad \partial_{t}\mcl{T}\mbf{v}(t)=\mcl{A}\mbf{v}(t) +\mcl{B}[\mbf{v}(t)\otimes\mbf{v}(t)],
\end{equation}
where $\mcl A \in \Pi_3$ and $\mcl B \in \Pi_{2^2}$ are given by
\begin{align}\label{eq:Aop_Bop}
	\mcl{A}&=\sum_{i=0}^{N}\mcl M_{\alpha_{i}}\mcl{R}_{i},	&
	\mcl{B}&=\sum_{i=0}^{N-1}\sum_{j=0}^{i}\mcl M_{\beta_{ij}}[\mcl{R}_{i}\otimes\mcl{R}_{j}],
\end{align}
where  $\mcl M_{c}$ denotes the multiplier operator associated to $c\in L_{\infty}[a,b]$, so that $(\mcl{M}_{c}\mbf{v})(s)=c(s)\mbf{v}(s)$.
We note that the fact that $\Pi_{2^2}$ is closed under composition with multiplier operators is relatively clear, but is also stated formally in Prop.~\ref{prop:1dPI_2dPI_comp}.
We define solutions to the quadratic PIE in Eqn.~\eqref{eq:PIE_quadratic} as follows.
\begin{defn}[Classical Solution to the Quadratic PIE]
	For a given initial state $\mbf{v}_{0}\in L_2$, we say that $\mbf{v}$ is a classical solution to the quadratic PIE defined by $\{\mcl{T},[\mcl{A},\mcl{B}]\}$ if $\mbf{v}$ is Frech\'et differentiable, $\mbf{v}(0)=\mbf{v}_{0}$, and for all $t\geq0$, $\mbf{v}(t)$ satisfies~\eqref{eq:PIE_quadratic}.
\end{defn}

The following lemma proves that there exists an invertible map between classical solutions to the PDE~\eqref{eq:PDE_nonlinear}, and classical solutions to the associated PIE~\eqref{eq:PIE_quadratic}.
\begin{lem}\label{lem:PDE2PIE}
	Suppose that that $B\in\R^{N\times 2N}$ satisfies the well-posedness conditions of Defn. 9 in~\cite{shivakumar2022GPDE_Arxiv}, and let the associated operators $\mcl{T},\mcl{R}_{j}\in\Pi_{2}$ for $j\in\{0,\hdots,N-1\}$ be as defined in Lemma~\ref{lem:Tmap}. Let further $\alpha_{k}\in L_{\infty}[a,b]$ for $k\in\{0,\hdots,N\}$, and $\beta_{ij}\in L_{\infty}[a,b]$ for $i\in\{0,\hdots,N-1\}$, $j\in\{0,\hdots,i\}$, and define the operators $\mcl{A}\in\Pi_{3}$ and $\mcl{B}\in\Pi_{2^2}$ as in~\eqref{eq:Aop_Bop}, where $\mcl{R}_{N}=I\in\Pi_{3}$. Then, $\mbf{v}$ is a classical solution to the quadratic PIE defined by $\{\mcl{T},[\mcl{A},\mcl{B}]\}$ with initial state $\mbf{v}_{0}$ if and only if $\mcl{T}\mbf{v}$ is a classical solution to the quadratic PDE defined by $\{B,[\alpha,\beta]\}$ with initial state $\mcl{T}\mbf{v}_{0}$. Conversely, $\mbf{u}$ is a classical solution to the quadratic PDE defined by $\{B,[\alpha,\beta]\}$ with initial state $\mbf{u}_{0}$ if and only if $=\partial_{s}^N\mbf{u}$ is a classical solution to the quadratic PIE defined by $\{\mcl{T},[\mcl{A},\mcl{B}]\}$ with initial state $\partial_{s}^N\mbf{u}_{0}$.
\end{lem}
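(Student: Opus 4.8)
The plan is to reduce the statement to two ingredients already in hand: the Partial Integral representation $\mcl{T}$ of the inverse of $\partial_s^N$ on $X_B$ (Lemma~\ref{lem:Tmap}), and the tensor-product identity $(\mcl{R}_i\mbf{v})(\mcl{R}_j\mbf{v})=(\mcl{R}_i\otimes\mcl{R}_j)[\mbf{v}\otimes\mbf{v}]$ (Proposition~\ref{prop:tensor_prod_PIs}). Before using them I would record the structural claims in~\eqref{eq:Aop_Bop}: for $i\le N-1$ each summand $\mcl{M}_{\alpha_i}\mcl{R}_i$ is a $2$-PI operator (a multiplier composed with a $2$-PI operator) and $\mcl{M}_{\alpha_N}\mcl{R}_N=\mcl{M}_{\alpha_N}\in\Pi_3$, so $\mcl{A}\in\Pi_3$; likewise each $\mcl{R}_i\otimes\mcl{R}_j$ lies in $\Pi_{2^2}$ and $\Pi_{2^2}$ is closed under composition with multipliers (Prop.~\ref{prop:1dPI_2dPI_comp}), so $\mcl{B}\in\Pi_{2^2}$.

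For the algebraic core, let $\mbf{u}$ be any curve valued in $X_B$ and set $\mbf{v}(t):=\partial_s^N\mbf{u}(t)\in L_2$. By Lemma~\ref{lem:Tmap} we have $\mbf{u}(t)=\mcl{T}\mbf{v}(t)$, $\partial_s^j\mbf{u}(t)=\mcl{R}_j\mbf{v}(t)$ for $j\le N-1$, and $\partial_s^N\mbf{u}(t)=\mbf{v}(t)=\mcl{R}_N\mbf{v}(t)$. Substituting these into~\eqref{eq:PDE_nonlinear} and replacing each product $(\mcl{R}_i\mbf{v})(\mcl{R}_j\mbf{v})$ by $(\mcl{R}_i\otimes\mcl{R}_j)[\mbf{v}\otimes\mbf{v}]$ via Proposition~\ref{prop:tensor_prod_PIs}, the right-hand side collapses to $\mcl{A}\mbf{v}(t)+\mcl{B}[\mbf{v}(t)\otimes\mbf{v}(t)]$, while the left-hand side is $\partial_t\mbf{u}(t)=\partial_t\mcl{T}\mbf{v}(t)$. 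Each rewriting is a reversible identity of functions on $[a,b]$, so $\mbf{u}$ satisfies~\eqref{eq:PDE_nonlinear} for all $s$ if and only if $\mbf{v}$ satisfies~\eqref{eq:PIE_quadratic}; this is exactly the computation sketched around~\eqref{eq:PIE_intermediate}, now promoted to an equivalence.

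It then remains to transfer the regularity and initial-state requirements. Since each $\mcl{R}_j=\partial_s^j\circ\mcl{T}\in\Pi_2$ is bounded on $L_2$ and $\mcl{R}_N=I$, the map $\mcl{T}$ is a bounded isomorphism $L_2\to X_B$ with inverse $\partial_s^N$, where $X_B$ carries the $H_N$-norm (equivalent on $X_B$ to $\mbf{u}\mapsto\|\partial_s^N\mbf{u}\|_{L_2}$ by the same boundedness). Composition with a bounded linear operator preserves Fr\'echet differentiability of curves, so $\mbf{u}$ is a Fr\'echet-differentiable $X_B$-valued curve iff $\mbf{v}=\partial_s^N\mbf{u}$ is a Fr\'echet-differentiable $L_2$-valued curve, with $\partial_t\mcl{T}\mbf{v}=\partial_t\mbf{u}$; and $\mbf{u}(0)=\mbf{u}_0$ iff $\mbf{v}(0)=\partial_s^N\mbf{u}_0$ because $\mbf{u}=\mcl{T}\mbf{v}$. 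Putting these together yields the first `iff' by taking $\mbf{u}=\mcl{T}\mbf{v}$ (which lies in $X_B$), and the second by noting that a classical PDE solution $\mbf{u}$ satisfies $\mbf{u}(t)\in X_B$ for all $t$, hence $\mbf{u}=\mcl{T}\partial_s^N\mbf{u}$, so it too is an instance of the first.

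The step I expect to need the most care is not any single estimate but the precise setup of spaces for the differentiability equivalence: one must fix the norm on $X_B$ so that $\mcl{T}$ and $\partial_s^N$ are genuinely bicontinuous (this is where boundedness of the $\mcl{R}_j$ enters), and one should read the `converse' direction as a statement about curves valued in $X_B$, since $\partial_s^N$ has a kernel of polynomials of degree $<N$ and so does not by itself pin down $\mbf{u}$ off of $X_B$. With those conventions, the lemma is a direct assembly of Lemma~\ref{lem:Tmap} and Proposition~\ref{prop:tensor_prod_PIs}.
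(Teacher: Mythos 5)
Your proposal is correct and follows essentially the same route as the paper's proof: both reduce the equivalence to the identities of Lemma~\ref{lem:Tmap} (to replace $\partial_s^j\mbf{u}$ by $\mcl{R}_j\mbf{v}$ and vice versa) and Proposition~\ref{prop:tensor_prod_PIs} (to rewrite each product $(\mcl{R}_i\mbf{v})(\mcl{R}_j\mbf{v})$ as $(\mcl{R}_i\otimes\mcl{R}_j)[\mbf{v}\otimes\mbf{v}]$), and then observe that every substitution is reversible. The only difference is that you make explicit the transfer of Fr\'echet differentiability and initial data through the bounded isomorphism $\mcl{T}$, a point the paper's proof leaves implicit; this is a refinement rather than a different argument.
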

\begin{proof}
	Let the operators $\mcl{T},\mcl{R}_{j}\in\Pi_{3}$ for $j\in\{0,\hdots,N\}$ be as defined. Then, by Lemma~\ref{lem:Tmap}, for any $\mbf{v}\in L_2[a,b]$,
	\begin{align*}
		\mbf{v}(s)&=\bl(\partial_{s}^{N}[\mcl{T}\mbf{v}]\br)(s),	&	&\text{and}	&
		(\mcl{R}_{j}\mbf{v})(s)&=\bl(\partial_{s}^{j}[\mcl{T}\mbf{v}]\br)(s).
	\end{align*}
	Defining $\mcl{A}\in\Pi_{3}$ as in~\eqref{eq:Aop_Bop}, by linearity of the PI operators, we find then
	\begin{align*}
		(\mcl{A}\mbf{v})(s)
		&=\sum_{i=0}^{N}\mcl (M_{\alpha_{i}}\mcl{R}_{i}\mbf{v})(s)	\\
		&=\sum_{i=0}^{N}\alpha_{i}(s)(\mcl{R}_{i}\mbf{v})(s)	
		=\sum_{i=0}^{N}\alpha_{i}(s)\partial_{s}^{i}(\mcl{T}\mbf{v})(s).
	\end{align*}
	Similarly, defining $\mcl{B}\in\Pi_{2}$ as in~\eqref{eq:Aop_Bop}, by Prop.~\ref{prop:tensor_prod_PIs}, and linearity of the PI operators,
	\begin{align*}
		(\mcl{B}[\mbf{v}\otimes\mbf{v}])(s)
		&=\sum_{i=0}^{N-1}\sum_{j=0}^{i}\mcl (M_{\beta_{ij}}[\mcl{R}_{i}\otimes\mcl{R}_{j}])[\mbf{v}\otimes\mbf{v}])(s)	\\
		&=\sum_{i=0}^{N-1}\sum_{j=0}^{i} \beta_{ij}(s)(\mcl{R}_{i}\mbf{v})(s)(\mcl{R}_{j}\mbf{v})(s)	\\
		&=\sum_{i=0}^{N-1}\sum_{j=0}^{i} \beta_{ij}(s)\partial_{s}^{i}(\mcl{T}\mbf{v})(s)\partial_{s}^{j}(\mcl{T}\mbf{v})(s)
	\end{align*}
	Invoking these relations, it follows that for any $\mbf{v}(t)\in L_2[a,b]$,
	\begin{align*}
		&\partial_{t}(\mcl{T}\mbf{v})(t,s)=(\mcl{A}\mbf{v})(t,s) +(\mcl{B}[\mbf{v}\otimes\mbf{v}])(t,s)	\\
		&\hspace*{5.0cm} \text{ and }~ \mbf{v}(0)=\mbf{v}_{0},	
		\intertext{if and only if}
		&\partial_{t}(\mcl{T}\mbf{v})(t,s)=\sum_{i=0}^{N}\alpha_{i}(s)\partial_{s}^{i}(\mcl{T}\mbf{v})(t,s)\\
		&\hspace*{2.0cm} +\sum_{i=0}^{N-1}\sum_{j=0}^{i} \beta_{ij}(s)\partial_{s}^{i}(\mcl{T}\mbf{v})(t,s)\partial_{s}^{j}(\mcl{T}\mbf{v})(t,s) \\ 
		&\hspace*{5.0cm}\text{ and }~\mcl{T}\mbf{v}(0)=\mcl{T}\mbf{v}_{0}.
	\end{align*}
	By definition, then, $\mbf{v}$ is a classical solution to the quadratic PIE defined by $\{\mcl{T},[\mcl{A},\mcl{B}]\}$ with initial state $\mbf{v}_{0}\in L_2[a,b]$ if and only if $\mcl{T}\mbf{v}$ is a classical solution to the quadratic PDE defined by $\{B,[\alpha,\beta]\}$ with initial state $\mcl{T}\mbf{v}_{0}\in X_{B}[a,b]$.
	
	Conversely, by Lemma~\ref{lem:Tmap}, we also know that for any $\mbf{u}\in X_{B}[a,b]$
	\begin{align*}
		\mbf{u}(s)&=\bl(\mcl{T}[\partial_{s}^{N}\mbf{u}]\br)(s),	&	&\text{and}	&
		\partial_{s}^{j}\mbf{u}(s)&=\bl(\mcl{R}_{j}[\partial_{s}^{N}\mbf{u}]\br)(s),
	\end{align*}
	By linearity of the PI operators, it follows, then, that
	\begin{align*}
		\sum_{i=0}^{N}\alpha_{i}(s)\partial_{s}^{i}\mbf{u}(s)
		&=\sum_{i=0}^{N}\alpha_{i}(s)(\mcl{R}_{i}[\partial_{s}^{N}\mbf{u}])(s)	\\
		&=\sum_{i=0}^{N}\mcl (M_{\alpha_{i}}\mcl{R}_{i}[\partial_{s}^{N}\mbf{u}])(s)
		=(\mcl{A}[\partial_{s}^{N}\mbf{u}])(s).
	\end{align*}
	Similarly, by Prop.~\ref{prop:tensor_prod_PIs}, and linearity of the PI operators,
	\begin{align*}
		&\sum_{i=0}^{N-1}\sum_{j=0}^{i} \beta_{ij}(s)(\partial_{s}^{i}\mbf{u})(s)(\partial_{s}^{j}\mbf{u})(s)	\\
		&=\sum_{i=0}^{N-1}\sum_{j=0}^{i} \beta_{ij}(s)(\mcl{R}_{i}[\partial_{s}^{N}\mbf{u}])(s)(\mcl{R}_{j}[\partial_{s}^{N}\mbf{u}])(s)	\\
		&=\sum_{i=0}^{N-1}\sum_{j=0}^{i}\mcl (M_{\beta_{ij}}[\mcl{R}_{i}\otimes\mcl{R}_{j}][\partial_{s}^{N}\mbf{u}\otimes\partial_{s}^{N}\mbf{u}])(s)	\\
		&=(\mcl{B}[\partial_{s}^{N}\mbf{u}\otimes\partial_{s}^{N}\mbf{u}])(s)
	\end{align*}
	It follows that, for any $\mbf{u}(t)\in X_{B}[a,b]$,
	\begin{equation*}\Resize{\linewidth}{
			\partial_{t}\mbf{u}(t,s)=\!\sum_{i=0}^{N}\alpha_{i}(s)\partial_{s}^{i}\mbf{u}(t,s) +\!\!\sum_{i=0}^{N-1}\sum_{j=0}^{i}\! \beta_{ij}(s)(\partial_{s}^{i}\mbf{u})(t,s)(\partial_{s}^{j}\mbf{u})(t,s)}
	\end{equation*}
	\vspace*{-0.5cm}
	\begin{align*}
		&\hspace*{5.0cm} \text{ and }~ \mbf{u}(0)=\mbf{u}_{0},	
		\intertext{if and only if}
		&\partial_{t}(\mcl{T}[\partial_{s}^{N}\mbf{u}])(t,s)=(\mcl{A}[\partial_{s}^{N}\mbf{u}])(t,s) +(\mcl{B}[\partial_{s}^{N}\mbf{u}\otimes\partial_{s}^{N}\mbf{u}])(t,s)	\\
		&\hspace*{5.0cm}\text{ and }~\partial_{s}^{N}\mbf{u}(0)=\partial_{s}^{N}\mbf{u}_{0}.
	\end{align*}
	By definition, then, $\mbf{u}$ is a classical solution to the quadratic PDE defined by $\{B,[\alpha,\beta]\}$ with initial state $\mbf{u}_{0}\in X_{B}[a,b]$ if and only if $\partial_{s}^{N}\mbf{u}$ is a classical solution to the quadratic PIE defined by $\{\mcl{T},[\mcl{A},\mcl{B}]\}$ with initial state $\partial_{s}^{N}\mbf{u}_{0}\in L_2[a,b]$, concluding the proof.
\end{proof}

\paragraph*{\tbf{Example}} To illustrate the quadratic PIE representation, suppose that $u$ is a solution of Burgers' equation as defined in Eqn.~\eqref{eq:Burgers_PDE}.
Define $\mcl{T},\mcl{R}\in\Pi_{2}$ as in~\eqref{eq:Burgers_Top_Rop}, and define $\mcl{B}\in\Pi_{2^2}$ for $\mbf{w}\in L_2[[0,1]^2]$ as \\[-1.6em]
\begin{align*}
	(\mcl{B}\mbf{w})(s)&=-((\mcl{T}\otimes\mcl{R})\mbf{w})(s)\\
	&=-\int_{0}^{s}\!\!\int_{0}^{\theta}2[s-1]\theta\eta\mbf{w}(\theta,\eta)d\eta d\theta\\
	&\hspace*{0.5cm}-\int_{s}^{1}\!\!\int_{0}^{s}[2s-1][\theta-1]\eta \mbf{w}(\theta,\eta)d\eta d\theta \\
	&\hspace*{1.0cm} -\int_{s}^{1}\!\!\int_{s}^{\theta}2s[\theta-1][\eta-1] \mbf{w}(\theta,\eta)d\eta d\theta.
\end{align*}
Then  $\mbf v:=u_{ss}$ is a solution of the quadratic PIE given by
\begin{equation*}
	\tbf{PIE:}\qquad \partial_{t}\mcl{T}\mbf{v}(t)=\nu\mbf{v}(t) +\mcl{B}[\mbf{v}(t)\otimes\mbf{v}(t)].
\end{equation*}

\smallskip

\section{A Stability Test for Quadratic PDEs}\label{sec:stability}

In the previous sections, we showed that by constructing a basis of distributed monomials on $L_2$ and by representing the dynamics of the PDE using the fundamental state (which lies in $L_2$), we may propose a compact and universal representation of a class of quadratic PDEs using three PI operators: $\mcl{T}$, $\mcl A$ and $\mcl B$, where $\mcl{T},\mcl{A} \in \Pi_3$ are 3-PI operators and $\mcl B \in \Pi_{2^2}$ is a linear combination of tensor products of 2-PI operators. Based on this representation, in this section, we propose a simple stability test using only quadratic storage functions. While such a stability test is necessarily conservative, it is sufficient to verify stability of many common quadratic PDEs and may be later extended to non-quadratic storage functions.

Specifically, we would like to verify the existence of a quadratic Lyapunov functional of the form $V(\mbf{u})=\ip{\mbf{u}}{\mcl{P}\mbf{u}}_{L_2}=\ip{\mcl{T}\mbf{v}}{\mcl{P}\mcl{T}\mbf{v}}_{L_2}$, where $\mcl{P}=\mcl{P}^*\in \Pi_{3}$ is positive with respect to $L_2$. If $\mbf{v}(t)$ satisfies the dynamics of the quadratic PIE in~\eqref{eq:PIE_quadratic}, then \\[-1.8em]

{\small
	\begin{align*}
		&\dot{V}(\mcl T\mbf{v}(t))=\ip{\partial_{t}\mcl{T}\mbf{v}(t)}{\mcl{P}\mcl{T}\mbf{v}(t)}_{L_2} + \ip{\mcl{T}\mbf{v}(t)}{\mcl{P}(\partial_{t}\mcl{T}\mbf{v}(t))}_{L_2}	\\
		&=\!\ip{\srbmat{\mbf{v}(t)\\\mbf{v}(t)\!\otimes\!\mbf{v}(t)}\!}{\!\bmat{\mcl{A}^*\mcl{P}\mcl{T}+\mcl{T}^*\mcl{P}\mcl{A}&\!\mcl{T}^*\mcl{P}\mcl{B}\\\mcl{B}^*\mcl{P}\mcl{T}&\!0}\!\srbmat{\mbf{v}(t)\\\mbf{v}(t)\!\otimes\!\mbf{v}(t)}}_{L_2}.
\end{align*}}
Since it can be shown that $\mbf v$ and $\mbf v \otimes \mbf v$ are linearly independent, we have that $\dot V(\mcl T\mbf v)\le 0$ for all $\mbf{v} \in L_2$ if and only if $\mcl{Q}:=[\mcl{A}^*\mcl{P}\mcl{T}+\mcl{T}^*\mcl{P}\mcl{A}]\preceq 0$ and $\ip{\mbf{v}}{\mcl T^*\mcl P\mcl{B}[\mbf{v}\otimes\mbf{v}]}_{L_2}=0$ for all $\mbf{v}\in L_2$, as stated in the following result.

\begin{prop}\label{prop:cubic_neg}
	Let $\mcl{Q}\in\Pi_{3}$ and $\mcl{R}\in\Pi_{3^2}$, and define $f:L_2\to\R$ as
	\begin{align*}
		f(\mbf{v}):=\ip{\srbmat{\mbf{v}\\\mbf{v}\otimes\mbf{v}}}{\bmat{\mcl{Q}&\mcl{R}\\\mcl{R}^*&0}\srbmat{\mbf{v}\\\mbf{v}\otimes\mbf{v}}}_{L_2}.
	\end{align*}
	Then, for any $\mcl{E}\in\Pi_{3}$, $f(\mbf{v})\leq -\|\mcl{E}\mbf{v}\|_{L_2}^2\leq 0$ for all $\mbf{v}\in L_2$ if and only if $\mcl{Q}\preceq -\mcl{E}^*\mcl{E}$, and $\ip{\mbf{v}}{\mcl{R}[\mbf{v}\otimes\mbf{v}]}_{L_2}=0$ for all $\mbf{v}\in L_2$.
\end{prop}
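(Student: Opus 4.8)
The plan is to expand the block quadratic form, exploit the mismatch in homogeneity degree between its two nonzero blocks, and reduce the claim to the definition of negativity of an operator as a quadratic form. Since every function here is $\R$-valued and $\mcl{R}^*$ is the $L_2$-adjoint of $\mcl{R}$, expanding the block inner product gives $\ip{\mbf{v}\otimes\mbf{v}}{\mcl{R}^*\mbf{v}}_{L_2}=\ip{\mcl{R}[\mbf{v}\otimes\mbf{v}]}{\mbf{v}}_{L_2}=\ip{\mbf{v}}{\mcl{R}[\mbf{v}\otimes\mbf{v}]}_{L_2}$, so that
\[
f(\mbf{v})=\ip{\mbf{v}}{\mcl{Q}\mbf{v}}_{L_2}+2\ip{\mbf{v}}{\mcl{R}[\mbf{v}\otimes\mbf{v}]}_{L_2}.
\]
With this identity the sufficiency direction is immediate: if $\ip{\mbf{v}}{\mcl{R}[\mbf{v}\otimes\mbf{v}]}_{L_2}=0$ for all $\mbf{v}$ and $\mcl{Q}\preceq-\mcl{E}^*\mcl{E}$ (i.e. $\ip{\mbf{v}}{\mcl{Q}\mbf{v}}_{L_2}\leq-\|\mcl{E}\mbf{v}\|_{L_2}^2$ for all $\mbf{v}$), then $f(\mbf{v})=\ip{\mbf{v}}{\mcl{Q}\mbf{v}}_{L_2}\leq-\|\mcl{E}\mbf{v}\|_{L_2}^2\leq0$.

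For necessity I would run a scaling argument in a real parameter $\lambda$. The terms $\ip{\mbf{v}}{\mcl{Q}\mbf{v}}_{L_2}$ and $\|\mcl{E}\mbf{v}\|_{L_2}^2$ are homogeneous of degree two in $\mbf{v}$, while $\ip{\mbf{v}}{\mcl{R}[\mbf{v}\otimes\mbf{v}]}_{L_2}$ is homogeneous of degree three (the tensor product being quadratic and the inner product with $\mbf{v}$ adding one more factor). Applying the hypothesis to $\lambda\mbf{v}$ and dividing by $\lambda^2>0$ therefore yields, for every $\lambda\in\R\setminus\{0\}$,
\[
\ip{\mbf{v}}{\mcl{Q}\mbf{v}}_{L_2}+2\lambda\ip{\mbf{v}}{\mcl{R}[\mbf{v}\otimes\mbf{v}]}_{L_2}\leq-\|\mcl{E}\mbf{v}\|_{L_2}^2.
\]
If $\ip{\mbf{v}}{\mcl{R}[\mbf{v}\otimes\mbf{v}]}_{L_2}\neq0$ for some $\mbf{v}$, choosing the sign of $\lambda$ so that the linear-in-$\lambda$ term is positive and letting $|\lambda|\to\infty$ makes the left-hand side unbounded above -- a contradiction. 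Hence $\ip{\mbf{v}}{\mcl{R}[\mbf{v}\otimes\mbf{v}]}_{L_2}=0$ for all $\mbf{v}\in L_2$, which is the second asserted condition. Feeding this back into the displayed inequality leaves $\ip{\mbf{v}}{\mcl{Q}\mbf{v}}_{L_2}\leq-\|\mcl{E}\mbf{v}\|_{L_2}^2=-\ip{\mbf{v}}{\mcl{E}^*\mcl{E}\mbf{v}}_{L_2}$ for all $\mbf{v}$, i.e. $\mcl{Q}\preceq-\mcl{E}^*\mcl{E}$, completing the proof.

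I do not expect a substantive obstacle here; the argument is elementary once the homogeneity bookkeeping is in place. The only points needing care are (i) tracking the degree/parity of each term -- the cross term is exactly the one that is odd under $\mbf{v}\mapsto-\mbf{v}$, which is what forces it to vanish -- and (ii) reading ``$\mcl{Q}\preceq-\mcl{E}^*\mcl{E}$'' as the inequality of quadratic forms $\ip{\mbf{v}}{\mcl{Q}\mbf{v}}_{L_2}\leq-\|\mcl{E}\mbf{v}\|_{L_2}^2$, since the form only sees the symmetric part of $\mcl{Q}$ (in the stability application $\mcl{Q}=\mcl{A}^*\mcl{P}\mcl{T}+\mcl{T}^*\mcl{P}\mcl{A}$ is already self-adjoint). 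This is the distributed analogue of the finite-dimensional fact that a cubic form dominated above by a fixed quadratic form must have vanishing homogeneous cubic part, which is precisely the mechanism that will later force the equality constraint $\ip{\mbf{v}}{\mcl{T}^*\mcl{P}\mcl{B}[\mbf{v}\otimes\mbf{v}]}_{L_2}=0$.
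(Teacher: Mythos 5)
Your proof is correct and uses essentially the same mechanism as the paper's: expand the block form into $\ip{\mbf{v}}{\mcl{Q}\mbf{v}}_{L_2}+2\ip{\mbf{v}}{\mcl{R}[\mbf{v}\otimes\mbf{v}]}_{L_2}$ and exploit the degree mismatch under the scaling $\mbf{v}\mapsto\lambda\mbf{v}$ to force the cubic term to vanish. The only cosmetic difference is that the paper exhibits a single explicit scale $\lambda=-\ip{\mbf{v}^*}{\mcl{Q}\mbf{v}^*}_{L_2}/\ip{\mbf{v}^*}{\mcl{R}[\mbf{v}^*\otimes\mbf{v}^*]}_{L_2}$ at which $f$ becomes positive, whereas you let $|\lambda|\to\infty$; both yield the same contradiction.
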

\begin{proof}
	To prove this result, we first remark that, by definition, $\ip{\mbf{v}}{\mcl{Q}\mbf{v}}_{L_2}\leq -\|\mcl{E}\mbf{v}\|_{L_2}^2=-\ip{\mbf{v}}{\mcl{E}^*\mcl{E}\mbf{v}}_{L_2}$ for all $\mbf{v}\in L_2$ if and only if $\mcl{Q}\preceq -\mcl{E}^*\mcl{E}$. As such, if $\ip{\mbf{v}}{\mcl{R}[\mbf{v}\otimes\mbf{v}]}_{L_2}=0$, it immediately follows that $f(\mbf{v})=\ip{\mbf{v}}{\mcl{Q}\mbf{v}}_{L_2}\leq -\|\mcl{E}\mbf{v}\|_{L_2}^2$ for all $\mbf{v}\in L_2$ if and only if $\mcl{Q}\preceq -\mcl{E}^*\mcl{E}$.
	
	It remains to prove that if $f(\mbf{v})\leq -\|\mcl{E}\mbf{v}\|_{L_2}^2$ for all $\mbf{v}\in L_2$, then $\ip{\mbf{v}}{\mcl{R}[\mbf{v}\otimes\mbf{v}]}_{L_2}= 0$. To prove this implication, suppose that $f(\mbf{v})\leq -\|\mcl{E}\mbf{v}\|_{L_2}^2\leq 0$ for all $\mbf{v}\in L_2$, but assume for contradiction that there exists a function $\mbf{v}^*\in L_2$ such that $\ip{\mbf{v}}{\mcl{R}[\mbf{v}^*\otimes\mbf{v}^*]}_{L_2}\neq 0$. Without loss of generality, we may assume that $\ip{\mbf{v}^*}{\mcl{R}[\mbf{v}^*\otimes\mbf{v}^*]}_{L_2}> 0$, as otherwise we can simply replace $\mbf{v}^*\leftrightarrow -\mbf{v}^*\in L_2$ to obtain the desired inequality. Since $f(\mbf{v})\leq 0$ for all $\mbf{v}\in L_2$, also $f(\mbf{v}^*)\leq 0$, and thus
	\begin{align*}
		\ip{\mbf{v}^*}{\mcl{Q}\mbf{v}^*}_{L_2}&=f(\mbf{v}^*)-2\ip{\mbf{v}^*}{\mcl{R}[\mbf{v}^*\otimes\mbf{v}^*]}_{L_2} <0
	\end{align*}
	Now, define $\lambda:=-\frac{\ip{\mbf{v}^*}{\mcl{Q}\mbf{v}^*}_{L_2}}{\ip{\mbf{v}^*}{\mcl{R}[\mbf{v}^*\otimes\mbf{v}^*]}_{L_2}}>0$
	and let $\hat{\mbf{v}}=\lambda\mbf{v}^*\in L_2$. Then,
	\begin{align*}
		f(\hat{\mbf{v}})&=\ip{\hat{\mbf{v}}}{\mcl{Q}\hat{\mbf{v}}}_{L_2} +2\ip{\hat{\mbf{v}}}{\mcl{R}[\hat{\mbf{v}}\otimes\hat{\mbf{v}}]}_{L_2}	\\
		&=\lambda^2 \ip{\mbf{v}^*}{\mcl{Q}\mbf{v}^*}_{L_2} +2\lambda^3 \ip{\mbf{v}^*}{\mcl{R}[\mbf{v}^*\otimes\mbf{v}^*]}_{L_2}	\\	
		&=\lambda^2 \bbl[\ip{\mbf{v}^*}{\mcl{Q}\mbf{v}^*}_{L_2} +2\lambda \ip{\mbf{v}^*}{\mcl{R}[\mbf{v}^*\otimes\mbf{v}^*]}_{L_2}\bbr]	\\
		&\hspace*{3.5cm}=-\lambda^2 \ip{\mbf{v}^*}{\mcl{Q}\mbf{v}^*}_{L_2}>0,
	\end{align*}
	contradicting the fact that $f(\mbf{v})\leq 0$ for all $\mbf{v}\in L_2$. Hence, for any $\mbf{v}\in L_2$ we must have $\ip{\mbf{v}}{\mcl{R}[\mbf{v}\otimes\mbf{v}]}_{L_2}= 0$. 
\end{proof}

Returning now to our quadratic Lyapunov Functional $V(\mcl{T}\mbf{v})=\ip{\mcl{T}\mbf{v}}{\mcl{P}\mcl{T}\mbf{v}}_{L_2}$, we find that
$\dot V(\mcl T\mbf v)\le 0$ for all $\mbf{v} \in L_2$ if and only if $\mcl{Q}:=[\mcl{A}^*\mcl{P}\mcl{T}+\mcl{T}^*\mcl{P}\mcl{A}]\preceq 0$ and $\ip{\mbf{v}}{\mcl T^*\mcl P\mcl{B}[\mbf{v}\otimes\mbf{v}]}_{L_2}=0$ for all $\mbf{v}\in L_2$. Here, since $\mcl A,\mcl P,\mcl T$ are standard PI operators, the first constraint $\mcl{Q}\preceq 0$ may be enforced using existing functionality of the PIETOOLS software suite~\cite{shivakumar2021PIETOOLS}.
Concentrating, then, on the condition $\ip{\mbf{v}}{\mcl T^*\mcl P\mcl{B}[\mbf{v}\otimes\mbf{v}]}_{L_2}=0$, we first show that the set $\Pi_{2^2}$ of tensor products of PI operators is closed under composition with a 2-PI operator -- so that $\mcl{T}^*,\mcl{P} \in \Pi_3$ and $\mcl{B} \in \Pi_{2^2}$ implies $\mcl{T}^*\mcl{P}\mcl{B} \in \Pi_{2^2}$.
\smallskip

\begin{prop}\label{prop:1dPI_2dPI_comp}
	Let $\mcl{Q}=\mcl{P}_{\{Q_{i}\}}\in\Pi_{3}$ and $\mcl{B}=\mcl{P}[B]\in\Pi_{2^{2}}$ be defined by parameters $Q=\{Q_{0},Q_{1},Q_{2}\}\in\mcl{N}_{3}$ and $B=[B_{1},B_{2},B_{3}]\in\mcl{N}_{2^2}$. Define $\mcl{G}:=\mcl{P}[G]\in\Pi_{2^2}$, where $G:=[G_{1},G_{2},G_{3}]\in\mcl{N}_{2^2}$ is given by
	
	\vspace*{-0.3cm}
	{\small
		\begin{align*}
			&G_{1}(s,\theta,\eta):=\!\int_{a}^{\eta}\!\! R_{13}(s,\zeta,\theta,\eta) d\zeta	
			+\!\int_{\eta}^{\theta}\!\! R_{12}(s,\zeta,\theta,\eta) d\zeta \\
			&+\!\int_{\theta}^{s}\!\! R_{11}(s,\zeta,\theta,\eta) d\zeta +\!\int_{s}^{b}\! R_{21}(s,\zeta,\theta,\eta)  d\zeta +Q_{0}(s)B_{1}(s,\theta,\eta)	,	\\
			&G_{2}(s,\theta,\eta):=\!\int_{a}^{\eta}\!\! R_{13}(s,\zeta,\theta,\eta) d\zeta
			+\!\int_{\eta}^{s}\!\! R_{12}(s,\zeta,\theta,\eta) d\zeta	\\
			&+\!\int_{s}^{\theta}\!\! R_{22}(s,\zeta,\theta,\eta) d\zeta
			+\!\int_{\theta}^{b}\!\! R_{21}(s,\zeta,\theta,\eta) d\zeta +Q_{0}(s)B_{2}(s,\theta,\eta),	\\
			&G_{3}(s,\theta,\eta):=\!\int_{a}^{s}\!\! R_{13}(s,\zeta,\theta,\eta) d\zeta
			+\!\int_{s}^{\eta}\!\! R_{23}(s,\zeta,\theta,\eta) d\zeta	\\
			&+\!\int_{\eta}^{\theta}\!\! R_{22}(s,\zeta,\theta,\eta) d\zeta
			+\!\int_{\theta}^{b}\!\! R_{21}(s,\zeta,\theta,\eta) d\zeta +Q_{0}(s)B_{3}(s,\theta,\eta),
	\end{align*}}
	with $R_{ij}(s,\zeta,\theta,\eta):=Q_{i}(s,\zeta)B_{j}(\zeta,\theta,\eta)$. Then, for any $\mbf{v}\in L_2$,
	\begin{equation*}
		\bl(\mcl{Q}(\mcl{B}\mbf{v})\br)(s)=(\mcl{G}\mbf{v})(s).
	\end{equation*}
\end{prop}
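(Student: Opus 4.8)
The plan is to expand $\mcl{Q}(\mcl{B}\mbf{v})$ directly from the two operator definitions and then reorganize the resulting iterated integrals, via Fubini's theorem, into the canonical form of Definition~\ref{defn:tensorPI}. Writing $\mcl{Q}=\mcl{P}_{\{Q_0,Q_1,Q_2\}}$, we have
\begin{equation*}
	\bl(\mcl{Q}(\mcl{B}\mbf{v})\br)(s)=Q_{0}(s)(\mcl{B}\mbf{v})(s)+\int_{a}^{s}\!Q_{1}(s,\zeta)(\mcl{B}\mbf{v})(\zeta)\,d\zeta+\int_{s}^{b}\!Q_{2}(s,\zeta)(\mcl{B}\mbf{v})(\zeta)\,d\zeta.
\end{equation*}
The multiplier term $Q_{0}(s)(\mcl{B}\mbf{v})(s)$ is already in the form of Definition~\ref{defn:tensorPI}, with kernels $Q_{0}(s)B_{j}(s,\theta,\eta)$ for $j=1,2,3$; these supply the last summand in each of $G_{1},G_{2},G_{3}$.

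For the two integral terms I would substitute the three-piece formula for $\mcl{B}$ into $(\mcl{B}\mbf{v})(\zeta)$, producing six iterated triple integrals of the form $\iiint_{\mcl{D}_{ij}}R_{ij}(s,\zeta,\theta,\eta)\mbf{v}(\theta,\eta)\,d\eta\,d\theta\,d\zeta$, where $R_{ij}(s,\zeta,\theta,\eta)=Q_{i}(s,\zeta)B_{j}(\zeta,\theta,\eta)$ as in the statement and $\mcl{D}_{ij}\subseteq[a,b]^{3}$ is the nested simplex dictated by the limits, e.g. $\mcl{D}_{11}=\{a\le\eta\le\theta\le\zeta\le s\}$, $\mcl{D}_{12}=\{a\le\eta\le\zeta\le s,\ \zeta\le\theta\le b\}$, $\mcl{D}_{13}=\{a\le\zeta\le s,\ \zeta\le\eta\le\theta\le b\}$, and analogously $\mcl{D}_{21},\mcl{D}_{22},\mcl{D}_{23}$ for the $Q_{2}$ pieces. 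The core of the argument is then to push the $\zeta$-integration innermost in each $\mcl{D}_{ij}$ and partition the remaining $(\theta,\eta)$-set into the three regions over which a $\Pi_{2^2}$ operator integrates, namely $\{a\le\eta\le\theta\le s\}$, $\{a\le\eta\le s\le\theta\le b\}$, and $\{s\le\eta\le\theta\le b\}$. For instance, $\mcl{D}_{11}$ lies wholly in the first region and contributes $\int_{\theta}^{s}R_{11}\,d\zeta$ to $G_{1}$; $\mcl{D}_{12}$ meets the first two regions, contributing $\int_{\eta}^{\theta}R_{12}\,d\zeta$ to $G_{1}$ and $\int_{\eta}^{s}R_{12}\,d\zeta$ to $G_{2}$; $\mcl{D}_{13}$ meets all three; and similarly for $\mcl{D}_{21},\mcl{D}_{22},\mcl{D}_{23}$.

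Collecting, for each of the three regions, all of the $\zeta$-integrals produced in this way (together with the corresponding $Q_{0}$ term) reproduces exactly the stated expressions for $G_{1},G_{2},G_{3}$, so that $(\mcl{Q}(\mcl{B}\mbf{v}))(s)=(\mcl{G}\mbf{v})(s)$. To finish, I would verify $G=[G_{1},G_{2},G_{3}]\in\mcl{N}_{2^2}$, hence $\mcl{G}\in\Pi_{2^2}$: each $G_{j}$ is a finite sum of $\zeta$-integrals over bounded intervals of products $Q_{i}(s,\zeta)B_{j}(\zeta,\theta,\eta)$ (plus a term $Q_{0}(s)B_{j}(s,\theta,\eta)$), and since $Q_{i}\in L_{2}[[a,b]^2]$, $Q_{0}\in L_{\infty}[a,b]$ and $B_{j}\in L_{2}[[a,b]^3]$, Cauchy--Schwarz in $\zeta$ gives $\|G_{j}\|_{L_2[[a,b]^3]}<\infty$. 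Applying the result twice then also yields the closure asserted before the proposition, $\mcl{T}^{*}\mcl{P}\mcl{B}=\mcl{T}^{*}(\mcl{P}\mcl{B})\in\Pi_{2^2}$ whenever $\mcl{T}^{*},\mcl{P}\in\Pi_{3}$ and $\mcl{B}\in\Pi_{2^2}$.

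The main obstacle is purely the bookkeeping in the partition step: one must correctly determine which of the six nested domains $\mcl{D}_{ij}$ intersects which of the three target regions, and compute the induced $\zeta$-limits in each of those cases without endpoint or orientation errors. There is no analytic subtlety --- everything reduces to Fubini's theorem applied to explicit polytopes in $[a,b]^{3}$ --- but the number of cases ($6$ domains, each splitting into up to $3$ pieces) makes a fully explicit verification lengthy, which is why the detailed computation is best deferred to an appendix.
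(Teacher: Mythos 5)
Your proposal is correct and follows essentially the same route as the paper's own proof in the appendix: decompose $\mcl{Q}$ into its multiplier and two integral parts, substitute the three-piece formula for $\mcl{B}$, and use Fubini to re-partition the six resulting simplices into the three canonical $(\theta,\eta)$-regions of a $\Pi_{2^2}$ operator, reading off the $\zeta$-limits in each case. The sample domains and contributions you work out ($\mcl{D}_{11},\mcl{D}_{12},\mcl{D}_{13}$) match the paper's computation, and your additional Cauchy--Schwarz check that $G\in\mcl{N}_{2^2}$ is a small point the paper leaves implicit.
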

\smallskip
\begin{proof}
	A proof is given in Appendix~\ref{appx:proof_1dPI_2dPI_comp}.
\end{proof}

Applying this result, we can define an operator $\mcl{G}:=\mcl{T}^*\mcl{P}\mcl{B}\in\Pi_{2^2}$, so that we may express $\ip{\mbf{v}}{\mcl{T}^*\mcl{P}\mcl{B}[\mbf{v}\otimes\mbf{v}]}_{L_2}=\ip{\mbf{v}}{\mcl G[\mbf{v}\otimes\mbf{v}]}_{L_2}$.

Now, in order to enforce $\ip{\mbf{v}}{\mcl G[\mbf{v}\otimes\mbf{v}]}_{L_2}\equiv 0$, we note that, in general, the quadratic representation of a polynomial is not unique, and hence it would be conservative to simply enforce the constraint that $\mcl{G}=0$. To resolve this issue, in Prop.~\ref{prop:PImap_ip}, we show that for any $\mcl G \in \Pi_{2^2}$, the quadratic representation of $\ip{\mbf{v}}{\mcl G[\mbf{v}\otimes\mbf{v}]}_{L_2}$ can be converted to a linear representation of the form $\mcl{K}_{\text{lin}}[\mcl G] ( \mbf{v}\otimes\mbf{v}\otimes\mbf{v})=\ip{\mbf{v}}{\mcl G[\mbf{v}\otimes\mbf{v}]}_{L_2}$, where the transformation $\mcl G \rightarrow \mcl K_{\text{lin}}[\mcl G]$ is given in Defn.~\ref{defn:PImap_ip}. Since the linear representations are uniquely defined, we may then enforce the constraint $\ip{\mbf{v}}{\mcl{G}[\mbf{v}\otimes\mbf{v}]}_{L_2}\equiv 0$ without conservatism by requiring $\mcl{K}_{\text{lin}}[\mcl G]=0$.
\begin{defn}\label{defn:PImap_ip}
	For any $\mcl{G}\in\Pi_{2^{2}}$ where $\mcl G=\mcl{P}[G]$ for $G=[G_{1},G_{2},G_{3}]\in\mcl{N}_{2^2}$, we define the operator $\mcl K_{\text{lin}}[\mcl G]:L_2[[a,b]^{3}]\to\R$ as
	\[
	\mcl{K}_{\text{lin}}[\mcl G]\mbf{w}=\int_{a}^{b}\!\!\int_{a}^{s}\!\!\int_{a}^{\theta}K(s,\theta,\eta)\mbf{w}(s,\theta,\eta)\; d\eta d\theta ds,
	\]
	where
	\[
	K(s,\theta,\eta):=G_{1}(s,\theta,\eta) +G_{2}(\theta,s,\eta) +G_{3}(\eta,s,\theta).
	\]
\end{defn}
\smallskip
\begin{prop}\label{prop:PImap_ip}
	For any $\mcl{G}\in\Pi_{2^{2}}$, we have
	\begin{equation*}	\ip{\mbf{v}}{\mcl{G}[\mbf{v}\otimes\mbf{v}]}_{L_2}=\mcl{K}_{\text{lin}}[\mcl G][\mbf{v}\otimes\mbf{v}\otimes\mbf{v}].
	\end{equation*}
	for any $\mbf{v}\in L_2[a,b]$,
\end{prop}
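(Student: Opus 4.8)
The plan is to compute $\ip{\mbf{v}}{\mcl{G}[\mbf{v}\otimes\mbf{v}]}_{L_2}$ directly from the definition of $\mcl{G}=\mcl{P}[G]$ given in Defn.~\ref{defn:tensorPI}, obtaining a sum of three iterated triple integrals over three nested regions of $[a,b]^3$, and then to relabel the dummy variables of integration in each integral so that all three regions become the single ``standard'' simplex $\{a\le\eta\le\theta\le s\le b\}$. The crucial observation is that the factor $\mbf{v}(s)\mbf{v}(\theta)\mbf{v}(\eta)$ appearing in every integrand is invariant under every permutation of the triple $(s,\theta,\eta)$; hence under each relabelling only the kernel $G_i$ has its arguments permuted, and summing the three relabelled kernels reproduces exactly $K(s,\theta,\eta)=G_{1}(s,\theta,\eta)+G_{2}(\theta,s,\eta)+G_{3}(\eta,s,\theta)$ of Defn.~\ref{defn:PImap_ip}.

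First I would write $\ip{\mbf{v}}{\mcl{G}[\mbf{v}\otimes\mbf{v}]}_{L_2}=\int_{a}^{b}\mbf{v}(s)\,(\mcl{G}[\mbf{v}\otimes\mbf{v}])(s)\,ds$ and substitute the three-term formula for $(\mcl{G}\mbf{w})(s)$ with $\mbf{w}=\mbf{v}\otimes\mbf{v}$, i.e. $\mbf{w}(\theta,\eta)=\mbf{v}(\theta)\mbf{v}(\eta)$. Invoking Fubini's theorem---legitimate because $G_i\in L_2[[a,b]^3]$ and $\mbf{v}\otimes\mbf{v}\otimes\mbf{v}\in L_2[[a,b]^3]$, so each product $G_i\cdot(\mbf{v}\otimes\mbf{v}\otimes\mbf{v})$ is integrable over the bounded cube $[a,b]^3$---this presents $\ip{\mbf{v}}{\mcl{G}[\mbf{v}\otimes\mbf{v}]}_{L_2}$ as $I_{1}+I_{2}+I_{3}$, where $I_{1}$ integrates $G_{1}(s,\theta,\eta)\mbf{v}(s)\mbf{v}(\theta)\mbf{v}(\eta)$ over $\{a\le\eta\le\theta\le s\le b\}$, $I_{2}$ integrates $G_{2}(s,\theta,\eta)\mbf{v}(s)\mbf{v}(\theta)\mbf{v}(\eta)$ over $\{a\le\eta\le s\le\theta\le b\}$, and $I_{3}$ integrates $G_{3}(s,\theta,\eta)\mbf{v}(s)\mbf{v}(\theta)\mbf{v}(\eta)$ over $\{a\le s\le\eta\le\theta\le b\}$ (the measure-zero overlaps of these closed regions are irrelevant to the integrals).

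The next step is the relabelling. In $I_{2}$, exchanging the names of the dummy variables $s$ and $\theta$ carries the region $\{a\le\eta\le s\le\theta\le b\}$ onto $\{a\le\eta\le\theta\le s\le b\}$, turns $G_{2}(s,\theta,\eta)$ into $G_{2}(\theta,s,\eta)$, and leaves the symmetric factor $\mbf{v}(s)\mbf{v}(\theta)\mbf{v}(\eta)$ unchanged. In $I_{3}$, renaming the previously smallest variable $s$ to $\eta$, the previously middle variable $\eta$ to $\theta$, and the previously largest variable $\theta$ to $s$ carries $\{a\le s\le\eta\le\theta\le b\}$ onto $\{a\le\eta\le\theta\le s\le b\}$ and sends $G_{3}(s,\theta,\eta)$ to $G_{3}(\eta,s,\theta)$, again leaving the symmetric factor fixed. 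Both relabellings are coordinate permutations (Jacobian of absolute value $1$), so the values of the integrals are unchanged; and since the three original regions are precisely the pieces of $\{a\le\eta\le\theta\le b\}$ on which $s$ lies above $\theta$, between $\eta$ and $\theta$, or below $\eta$, the three relabelled integrals each run over the same simplex $\{a\le\eta\le\theta\le s\le b\}$ exactly once. Summing them gives
\begin{equation*}\Resize{\linewidth}{
\ip{\mbf{v}}{\mcl{G}[\mbf{v}\otimes\mbf{v}]}_{L_2}=\int_{a}^{b}\!\!\int_{a}^{s}\!\!\int_{a}^{\theta}\bl(G_{1}(s,\theta,\eta)+G_{2}(\theta,s,\eta)+G_{3}(\eta,s,\theta)\br)\mbf{v}(s)\mbf{v}(\theta)\mbf{v}(\eta)\,d\eta\,d\theta\,ds,}
\end{equation*}
and, recognizing $\mbf{v}(s)\mbf{v}(\theta)\mbf{v}(\eta)=[\mbf{v}\otimes\mbf{v}\otimes\mbf{v}](s,\theta,\eta)$ and comparing with Defn.~\ref{defn:PImap_ip}, this is exactly $\mcl{K}_{\text{lin}}[\mcl{G}][\mbf{v}\otimes\mbf{v}\otimes\mbf{v}]$. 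There is no deep obstacle here; the only point demanding care is getting the three coordinate permutations right and verifying that the relabelled regions tile the standard simplex once each without gap or overlap, which is the short combinatorial check just described.
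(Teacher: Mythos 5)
Your proof is correct and follows essentially the same route as the paper's: expand the inner product into three iterated integrals over the three nested regions and permute the dummy variables to bring each onto the simplex $\{a\le\eta\le\theta\le s\le b\}$, yielding the kernel $K(s,\theta,\eta)=G_{1}(s,\theta,\eta)+G_{2}(\theta,s,\eta)+G_{3}(\eta,s,\theta)$. The only cosmetic difference is that you handle the third term with a single $3$-cycle relabelling, whereas the paper reaches $G_{3}(\eta,s,\theta)$ via two successive applications of its identity~\eqref{eq:integral_identities}.
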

\smallskip
\begin{proof}
	The proof follows by expanding
	{\small
		\begin{align*}
			&\ip{\mbf{v}}{\mcl{G}[\mbf{v}\otimes\mbf{v}]}_{L_2}	
			=\int_{a}^{b}\!\!\int_{a}^{s}\!\!\int_{a}^{\theta}G_{1}(s,\theta,\eta)\mbf{v}(s)\mbf{v}(\theta)\mbf{v}(\eta)\; d\eta d\theta ds	\\
			&\quad +\int_{a}^{b}\!\!\int_{s}^{b}\!\!\int_{a}^{s}G_{2}(s,\theta,\eta)\mbf{v}(s)\mbf{v}(\theta)\mbf{v}(\eta)\; d\eta d\theta ds	\\
			&\qquad +\int_{a}^{b}\!\!\int_{s}^{b}\!\!\int_{s}^{\theta}G_{3}(s,\theta,\eta)\mbf{v}(s)\mbf{v}(\theta)\mbf{v}(\eta)\; d\eta d\theta ds,	
	\end{align*}}
	and invoking e.g. the identity in~\eqref{eq:identity_proof}
	to express all terms using integrals $\int_{a}^{b}\!\int_{a}^{s}\!\int_{a}^{\theta}$.
	A full proof is given in Appendix~\ref{appx:proof_PImap_ip}.
\end{proof}
\smallskip

Applying these result, we can now declare an optimization program for testing stability of a quadratic PDE as follows.

\smallskip
\begin{thm}\label{thm:stability_LPI}
	Let $\{B,[\alpha,\beta]\}$ define a quadratic PDE as in~\eqref{eq:PDE_nonlinear}, and let associated PI operators $\{\mcl{T},[\mcl{A},\mcl{B}]\}$ be as defined in Lem.~\ref{lem:PDE2PIE}. Suppose that there exist $\epsilon,\delta>0$ and $\mcl{P}=\mcl{P}^*\in\Pi_{3}$ such that \\[-2.0em]
	\begin{align}\label{eq:stability_LPI}
		&\mcl{P}\succeq \epsilon I,	\\
		&\mcl{Q}:=[\mcl{A}^*\mcl{P}\mcl{T}+\mcl{T}^*\mcl{P}\mcl{A}]\preceq -\delta\mcl{T}^*\mcl{T},	\notag\\
		&\mcl{K}_{\text{lin}}[\mcl{T}^*\mcl{P}\mcl{B}]=0.	\notag
	\end{align}
	Finally, let $\mu=\|\mcl{P}\|_{\mcl{L}_{L_{2}}}$.
	Then, any solution $\mbf{u}(t)$ to the PDE defined by $\{B,[\alpha,\beta]\}$ satisfies
	\begin{align*}
		\|\mbf{u}(t)\|_{L_2}^2\leq \frac{\mu}{\epsilon}\|\mbf{u}(0)\|_{L_2}^2 e^{-\frac{\delta}{\mu}t}.
	\end{align*}
\end{thm}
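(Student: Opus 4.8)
The plan is to combine the PIE representation from Lemma~\ref{lem:PDE2PIE} with the algebraic manipulations of Propositions~\ref{prop:cubic_neg}, \ref{prop:1dPI_2dPI_comp}, and \ref{prop:PImap_ip} to turn the three operator constraints in~\eqref{eq:stability_LPI} into a decay estimate on $V(\mbf{u}(t)) = \ip{\mbf{u}(t)}{\mcl{P}\mbf{u}(t)}_{L_2}$. First I would fix a classical solution $\mbf{u}(t)$ to the PDE, and let $\mbf{v}(t) := \partial_s^N \mbf{u}(t)$ be the associated fundamental state, which by Lemma~\ref{lem:PDE2PIE} is a classical solution to the quadratic PIE $\partial_t \mcl{T}\mbf{v} = \mcl{A}\mbf{v} + \mcl{B}[\mbf{v}\otimes\mbf{v}]$, and which satisfies $\mbf{u}(t) = \mcl{T}\mbf{v}(t)$. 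Since $V$ is quadratic and $\mbf{v}$ is Fréchet differentiable, the product rule gives
\begin{align*}
	\dot V(\mbf{u}(t)) &= \ip{\partial_t \mcl{T}\mbf{v}(t)}{\mcl{P}\mcl{T}\mbf{v}(t)}_{L_2} + \ip{\mcl{T}\mbf{v}(t)}{\mcl{P}\,\partial_t \mcl{T}\mbf{v}(t)}_{L_2}.
\end{align*}
Substituting the PIE dynamics and collecting terms, this equals the quadratic form $f(\mbf{v}(t))$ associated (in the notation of Prop.~\ref{prop:cubic_neg}) to $\mcl{Q} = \mcl{A}^*\mcl{P}\mcl{T} + \mcl{T}^*\mcl{P}\mcl{A}$ and $\mcl{R} = \mcl{T}^*\mcl{P}\mcl{B}$.

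Next I would discharge the cross term. By Proposition~\ref{prop:1dPI_2dPI_comp}, $\mcl{T}^*\mcl{P}\mcl{B} \in \Pi_{2^2}$, so Proposition~\ref{prop:PImap_ip} applies and gives $\ip{\mbf{v}}{\mcl{T}^*\mcl{P}\mcl{B}[\mbf{v}\otimes\mbf{v}]}_{L_2} = \mcl{K}_{\text{lin}}[\mcl{T}^*\mcl{P}\mcl{B}][\mbf{v}\otimes\mbf{v}\otimes\mbf{v}]$ for all $\mbf{v}\in L_2$; the third constraint in~\eqref{eq:stability_LPI} forces this to vanish identically. Hence, applying Proposition~\ref{prop:cubic_neg} with $\mcl{E} := \sqrt{\delta}\,\mcl{T}$ (the second constraint says exactly $\mcl{Q} \preceq -\delta\,\mcl{T}^*\mcl{T} = -\mcl{E}^*\mcl{E}$), I get
\begin{align*}
	\dot V(\mbf{u}(t)) = f(\mbf{v}(t)) \leq -\delta\,\|\mcl{T}\mbf{v}(t)\|_{L_2}^2 = -\delta\,\|\mbf{u}(t)\|_{L_2}^2.
\end{align*}
On the other side, the first constraint $\mcl{P}\succeq\epsilon I$ gives the lower bound $V(\mbf{u}(t)) \geq \epsilon\|\mbf{u}(t)\|_{L_2}^2$, and boundedness of $\mcl{P}$ gives the upper bound $V(\mbf{u}(t)) \leq \mu\|\mbf{u}(t)\|_{L_2}^2$ with $\mu = \|\mcl{P}\|_{\mcl{L}_{L_2}}$. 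Combining, $\dot V(\mbf{u}(t)) \leq -\delta\|\mbf{u}(t)\|_{L_2}^2 \leq -\tfrac{\delta}{\mu} V(\mbf{u}(t))$, so Grönwall's inequality yields $V(\mbf{u}(t)) \leq V(\mbf{u}(0))\, e^{-\frac{\delta}{\mu}t}$, and feeding the two norm bounds back in gives $\|\mbf{u}(t)\|_{L_2}^2 \leq \tfrac{1}{\epsilon}V(\mbf{u}(t)) \leq \tfrac{1}{\epsilon}V(\mbf{u}(0))\,e^{-\frac{\delta}{\mu}t} \leq \tfrac{\mu}{\epsilon}\|\mbf{u}(0)\|_{L_2}^2\, e^{-\frac{\delta}{\mu}t}$, as claimed.

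The routine parts are the product rule, the norm sandwich, and Grönwall. The one place requiring care — and the step I would expect to be the main obstacle — is justifying the very first equality: that $t\mapsto V(\mbf{u}(t))$ is differentiable with the claimed derivative. This needs the chain rule for the composition of the (continuous, bilinear) map $(\mbf{x},\mbf{y})\mapsto\ip{\mbf{x}}{\mcl{P}\mbf{y}}_{L_2}$ with the Fréchet-differentiable curve $t\mapsto\mcl{T}\mbf{v}(t)$, together with the interchange $\partial_t(\mcl{T}\mbf{v}) = \mcl{T}(\partial_t\mbf{v})$ (which holds since $\mcl{T}$ is a bounded linear operator), and then the substitution of the PIE dynamics for $\partial_t\mbf{v}$. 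Since the definition of classical solution already bakes in Fréchet differentiability of $\mbf{v}$ and boundedness of all the PI operators involved is established earlier, this can be handled cleanly, but it is where the analytic (as opposed to algebraic) content of the proof lies; everything downstream is algebra on the operator constraints.
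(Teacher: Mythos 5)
Your proposal is correct and follows essentially the same route as the paper: pass to the fundamental state via Lemma~\ref{lem:PDE2PIE}, differentiate $V$ along the PIE dynamics, annihilate the cubic cross term using $\mcl{K}_{\text{lin}}[\mcl{T}^*\mcl{P}\mcl{B}]=0$ together with Propositions~\ref{prop:1dPI_2dPI_comp} and~\ref{prop:PImap_ip}, bound the remaining quadratic term by $-\delta\|\mcl{T}\mbf{v}\|_{L_2}^2$, and close with the norm sandwich and Gr\"onwall. The only cosmetic difference is that you route the negativity step through Proposition~\ref{prop:cubic_neg} with $\mcl{E}=\sqrt{\delta}\,\mcl{T}$, whereas the paper performs the same two estimates directly.
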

\smallskip
\begin{proof}	
	Consider the functional $V:L_2\rightarrow\R$ defined for $\mbf{v}\in L_2$ as
	\begin{align*}
		V(\mbf{v})=\ip{\mcl{T}\mbf{v}}{\mcl{P}\mcl{T}\mbf{v}}_{L_2}\geq \epsilon \|\mcl{T}\mbf{v}\|^2_{L_2}.
	\end{align*}
	Since $\|\mcl{P}\|_{\mcl{L}_{L_2}}=\mu$, this function is bounded from above as
	\begin{align*}
		V(\mbf{v})=
		\ip{\mcl{T}\mbf{v}}{\mcl{P}\mcl{T}\mbf{v}}_{L_2}\leq \mu\|\mcl{T}\mbf{v}\|_{L_2}^2.
	\end{align*}
	Now, let $\mbf{u}$ be an arbitrary solution to the PDE defined by $\{B,[\alpha,\beta]\}$, and fix  $\mbf{v}:=\partial_{s}^{2}\mbf{u}$. Then, by Lemma~\ref{lem:PDE2PIE}, $\mbf{u}=\mcl{T}\mbf{v}$, and $\mbf{v}$ is a solution to the quadratic PIE defined by $\{\mcl{T},[\mcl{A},\mcl{B}]\}$. As such, the temporal derivative of $V$ along $\mbf{v}$ satisfies
	\begin{align*}
		&\dot{V}(\mbf{v}(t))
		=\ip{\partial_{t}\mcl{T}\mbf{v}(t)}{\mcl{P}\mcl{T}\mbf{v}(t)}_{L_2}
		+\ip{\mcl{T}\mbf{v}(t)}{\mcl{P}(\partial_{t}\mcl{T}\mbf{v}(t))}_{L_2}    \\
		&=\!\ip{\bmat{\mcl{A}&\mcl{B}}\!\srbmat{\mbf{v}(t)\\\mbf{v}(t)\!\otimes\!\mbf{v}(t)}}{\mcl{P}\mcl{T}\mbf{v}(t)}_{L_2}	\\ &\hspace*{1.5cm} +\ip{\mcl{T}\mbf{v}(t)}{\mcl{P}\bmat{\mcl{A}&\mcl{B}}\!\srbmat{\mbf{v}(t)\\\mbf{v}(t)\!\otimes\!\mbf{v}(t)}}_{L_2} \\
		&=\!\ip{\srbmat{\mbf{v}(t)\\\mbf{v}(t)\!\otimes\!\mbf{v}(t)}\!}{\!\bmat{\mcl{A}^*\mcl{P}\mcl{T} \!+\! \mcl{T}^*\mcl{P}\mcl{A}&\mcl{T}^*\mcl{P}\mcl{B}\\\mcl{B}^*\mcl{P}\mcl{T}&0}\!\srbmat{\mbf{v}(t)\\\mbf{v}(t)\!\otimes\!\mbf{v}(t)}}_{L_2}    \\
		&=\ip{\mbf{v}(t)}{\mcl{Q}\mbf{v}(t)}_{L_2} +2\ip{\mbf{v}(t)}{\mcl{T}^*\mcl{P}\mcl{B}\thinspace[\mbf{v}(t)\otimes\mbf{v}(t)]}_{L_2}.
	\end{align*}
	Here, since $\mcl{K}_{\text{lin}}[\mcl{T}^*\mcl{P}\mcl{B}]=0$, by Proposition~\ref{prop:PImap_ip} we have
	\begin{equation*}\Resize{\linewidth}
		{\ip{\mbf{v}(t)}{\mcl{T}^*\mcl{P}\mcl{B}[\mbf{v}(t)\!\otimes\!\mbf{v}(t)]}_{L_2}\!\!=\!\mcl{K}_{\text{lin}}[\mcl{T}^*\mcl{P}\mcl{B}][\mbf{v}(t)\!\otimes\!\mbf{v}(t)\!\otimes\!\mbf{v}(t)]\!=\!0.}
	\end{equation*}
	Since also $Q:=[\mcl{A}^*\mcl{P}\mcl{T} \!+\! \mcl{T}^*\mcl{P}\mcl{A}]\preceq-\delta\mcl{T}^*\mcl{T}$, it follows that
	\begin{align*}
		\dot{V}(\mbf{v}(t))\!=\!\ip{\mbf{v}(t)}{\mcl{Q}\mbf{v}(t)}_{L_2}\!\leq\! -\delta\|\mcl{T}\mbf{v}(t)\|^2_{L_2}
		\leq -\frac{\delta}{\mu}V(\mbf{v}(t)).
	\end{align*}
	Applying the Gr\"onwall-Bellman inequality, we find that
	\begin{align*}
		V(\mbf{v}(t))\leq V(\mbf{v}(0))e^{-\frac{\delta}{\mu} t},
	\end{align*}
	and therefore
	\begin{align*}
		\|\mcl{T}\mbf{v}(t)\|^2_{L_2}\leq \frac{\mu}{\epsilon}\|\mcl{T}\mbf{v}(0)\|_{L_2}^2 e^{-\frac{\delta}{\mu} t}.
	\end{align*}
	Finally, since $\mbf{u}=\mcl{T}\mbf{v}$, we conclude that
	\begin{align*}
		\|\mbf{u}(t)\|^2_{L_2}\leq \frac{\mu}{\epsilon}\|\mbf{u}(0)\|_{L_2}^2 e^{-\frac{\delta}{\mu} t}.
	\end{align*}	
\end{proof}

\section{Numerical Examples}\label{sec:examples}

Implementation of the stability test in Theorem~\ref{thm:stability_LPI} requires certain functionality not implemented in PIETOOLS 2022~\cite{shivakumar2021PIETOOLS} (the current release). Specifically, PIETOOLS 2022 does not support PI operators of the class $\Pi_{2^2}$ -- i.e. the operator $\mcl{B}$. As a result, construction of the quadratic PIE representation and use of tensor PI equality constraints requires significant expertise on the part of the user. While we expect such functionality to be included in a future release, for the purposes of this paper, we have created a CodeOcean capsule which allows the user to declare a limited class of quadratic PDEs and then automates the process of construction of the tensor PI representation and Semidefinite Programming (SDP)-based stability test~\cite{CodeOcean_QuadraticPIE}.
An early version of this software was used to produce the results in the following subsections.

\subsection{Burgers' Equation}

Consider Burgers' equation on $s\in[0,1]$, with an added reaction term $ru(t,s)$, and Dirichlet boundary conditions:
\begin{align*}
	\textbf{PDE:}\qquad u_{t}(t,s)&= u_{ss}(t,s) + ru(t,s) - u(t,s) u_{s}(t,s),	\\
	\textbf{BCs:}\qquad u(t,0)&=0,\qquad u(t,1)=0.
\end{align*}
Let $\mbf{v}(t):=u_{ss}(t)$ be the fundamental state, and define PI operators $\mcl{T},\mcl{R}\in\Pi_{2}$ as in~\eqref{eq:Burgers_Top_Rop}
Then, $u(t)=\mcl{T}\mbf{v}(t)$ and $u_{s}(t)=\mcl{R}\mbf{v}(t)$. We obtain an equivalent PIE representation
\begin{align*}
	\textbf{PIE:}\qquad \partial_{t}\mcl{T}\mbf{v}(t) 
	=\bbl[\underbrace{1+r\mcl{T}}_{\mcl A} \quad \underbrace{-(\mcl{T}\otimes\mcl{R})}_{\mcl B}\bbr]\slbmat{\mbf{v}(t)\\\mbf{v}(t)\otimes\mbf{v}(t)}.
\end{align*}

Applying the conditions of Theorem~\ref{thm:stability_LPI} with $\epsilon=10^{-4}$, $\delta=10^{-6}$, the proposed algorithm is able to find a Lyapunov stability proof for any $r\leq 9.8696\approx \pi^2$, which corresponds precisely to the bound obtained in~\cite{peet2019PIE_representation} for the linearization of Burger's equation -- i.e. where we neglect the $u u_s$ term.

Before moving on to the next example, we note that it is well known that the nonlinear term $u u_s$ vanishes when taking the derivative of a candidate Lyapunov functional of the form $V(u)=\|u\|_{L_2}^2=\|\mcl{T}\mbf{v}\|_{L_2}^2$, i.e. letting $\mcl{P}=1$ in~\eqref{eq:stability_LPI}. To illustrate this phenomenon in the tensor PIE representation, note that, for $r=0$, the functional $V(\mbf{v})=\|\mcl{T}\mbf{v}\|_{L_2}^2$ satisfies
\begin{align*}
	\dot{V}(\mbf{v})=\ip{\mbf{v}}{[\mcl{T}^*+\mcl{T}]\mbf{v}}_{L_2} - \ip{\mbf{v}}{\mcl{T}^*[\mcl{T}\otimes\mcl{R}][\mbf{v}\otimes\mbf{v}]}_{L_2}.
\end{align*}
Then, defining $\mcl{K}_{\text{lin}}$ as in Proposition~\ref{prop:PImap_ip}, we find that $\mcl{K}_{\text{lin}}(\mcl{T}^*[\mcl{T}\otimes\mcl{R}])= 0$, and thus $\dot{V}(\mbf{v})=\ip{\mbf{v}}{[\mcl{T}^*+\mcl{T}]\mbf{v}}_{L_2}$. Finally, a simple calculation yields $\mcl{T}^*=\mcl{T}=-\mcl{R}^*\mcl{R}<0$, so that $\dot{V}(\mbf{v})=-2\ip{\mcl{R}\mbf{v}}{\mcl{R}\mbf{v}}_{L_2}=-2\|u_{s}\|^2_{L_2} \le 0$. Of course, one could derive a similar result combining integration by parts with the boundary conditions. However, the advantage of the PIE framework is that such ad hoc manipulations are unnecessary due to the fact that the boundary conditions are embedded in the operators $\mcl T$ and $\mcl R$.

\subsection{Kortweg-De Vries Equation}\label{sec:examples:KdV}

As mentioned in the previous example, the nonlinear term $u u_s$ is well-known to vanish in the derivative of the Lyapunov functional candidate $V(u)=\|u\|_{L_2}^2$ -- as can be proved using integration by parts. For this reason, we now consider a modified version of the Korteweg-De Vries (KdV) equation with an additional, different type of quadratic term -- $u(s)^2$. The equation is defined on $s\in[0,1]$ with Dirichlet-Neumann boundary conditions,
\begin{align*}
	\textbf{PDE:}\quad \!u_{t}(t,s)&= -u_{sss}(t,s) + u(t,s)[r u(t,s)+6u_{s}(t,s)],	\\
	\textbf{BCs:}\quad \!u(t,0)&=0,\qquad u(t,1)=0,\qquad u_{s}(t,1)=0.
\end{align*}
Define the PI operators $\mcl{T},\mcl{R}\in\Pi_{2}$ for $\mbf{v}\in L_2[0,1]$ as
{
	\begin{align*}
		&\bl(\mcl{T}\mbf{v}\br)(s)
		:=\!\int_{0}^{1}\frac{1}{2}[s-1]^2\theta^2 \mbf{v}(\theta)d\theta -\! \int_{s}^{1}\frac{1}{2}[s-\theta]^2\mbf{v}(\theta)d\theta,	\\
		&\bl(\mcl{R}\mbf{v}\br)(s)
		:=\!\int_{0}^{1}[s-1]\theta^2 \mbf{v}(\theta)d\theta -\! \int_{s}^{1} [s-\theta]\mbf{v}(\theta)d\theta.
\end{align*}}
Then, defining fundamental state $\mbf{v}(t):=u_{sss}(t)$ we have $u(t)=\mcl{T}\mbf{v}(t)$ and $u_{s}(t)=\mcl{R}\mbf{v}(t)$. Imposing this relation in the PDE, the system can be equivalently represented as
\begin{equation*}
	\textbf{PIE:}\hspace*{0.3cm} \partial_{t}\mcl{T}\mbf{v}(t)
	=\bbl[\underbrace{-1}_{\mcl A} \quad \underbrace{\mcl{T}\otimes(r\mcl{T}+6\mcl{R})}_{\mcl B}\bbr]\slbmat{\mbf{v}(t)\\\mbf{v}(t)\otimes\mbf{v}(t)}.
\end{equation*}

Applying the conditions of Theorem~\ref{thm:stability_LPI} with $\epsilon=10^{-4}$, $\delta=10^{-6}$, the proposed algorithm is able to find a global Lyapunov stability proof for any $0\leq r\leq 4.6098$.

Although the authors are unaware of any analytic results for stability of the modified KdV equation, the numerical algorithm for each value of $r$ appears to converge to a Lyapunov functional of the form 
\begin{equation*}\Resize{\linewidth}{
		V(u)=C_{0}\!\int_{0}^{1}\!e^{\frac{r}{2}s}u(s)^2 ds +C_{1}\!\int_{0}^{1}\!\!\int_{0}^{1}\!e^{\frac{r}{3}(s+\theta)}u(s)u(\theta)d\theta ds,}
\end{equation*}
for constants $C_{0},C_{1}>0$. Considering the simpler Lyapunov functional candidate $V(u)=\int_{0}^{1}e^{\frac{r}{2}s}u(s)^2 ds$,
it can be shown that, along solutions to the PDE, the temporal derivative of this functional is given by
\begin{align*}
	\dot{V}(u)
	&=\int_{0}^{1}e^{\frac{r}{2}s}\bbl[\frac{r^3}{16}u(s)^2-\frac{3r}{4}u_{s}(s)^2\bbr] ds -\frac{1}{2}u_{s}(0)^2.
\end{align*}
Invoking the Poincar\'e inequality, stability can then be verified for $r\in[0,4.0017]$, though this bound is clearly conservative. See Appendix~\ref{appx:KdV} for more details.

\subsection{Kuramoto-Sivashinsky Equation}\label{sec:examples:KSE}

We now consider the Kuramoto-Sivashinsky Equation (KSE). As in the previous example, we add a quadratic term $ru^2$ to the dynamics, introducing a nonlinear term which does not vanish trivially for quadratic Lyapunov functional candidates. In particular, we consider a system of the form
\begin{align*}
	u_{t}(t,s)\!&=\! -u_{ssss}(t,s) -\!u_{ss}(t,s) -\! u(t,s)[ru(t,s)\!+\!u_{s}(t,s)],\quad		\\
	u(t,0)\!&=\!u(t,1)=u_{s}(t,0)=u_{s}(t,1)=0.
\end{align*}
Define PI operators $\mcl{T},\mcl{R}_{1},\mcl{R}_2\in\Pi_{2}$ for $\mbf{v}\in L_2[0,1]$ as
{\small%
	\begin{align*}
		\bl(\mcl{T}\mbf{v}\br)(s)&:=-\int_{0}^{s}\frac{1}{6}[s-1]^2 \theta^2 [2s\theta-3s+\theta] \mbf{v}(\theta)d\theta\\
		&\qquad\quad -\int_{s}^{1}\frac{1}{6}[\theta-1]^2 s^2[2s\theta-3\theta+s] \mbf{v}(\theta)d\theta,	\\
		\bl(\mcl{R}_{1}\mbf{v}\br)(s)&:=-\int_{0}^{s}\frac{1}{2}[s-1]\theta^2 [2s\theta-3s+1]\mbf{v}(\theta)d\theta \\
		&\qquad\quad -\int_{s}^{1}\frac{1}{2}s[\theta-1]^2[2s\theta +s-2\theta]\mbf{v}(\theta)d\theta,	\\
		\bl(\mcl{R}_{2}\mbf{v}\br)(s)&:=-\int_{0}^{s}\theta^2 [2s\theta-3s-\theta+2]\mbf{v}(\theta)d\theta \\
		&\qquad\quad -\int_{s}^{1}[\theta-1]^2[2s\theta +s-\theta]\mbf{v}(\theta)d\theta.
\end{align*}}
Then, defining fundamental state $\mbf{v}(t):=u_{ssss}(t)$, we have $u(t)=\mcl{T}\mbf{v}(t)$, $u_{s}(t)=\mcl{R}_{1}\mbf{v}(t)$, and $u_{ss}(t)=\mcl{R}_{2}\mbf{v}(t)$. We obtain an equivalent PIE representation as
\begin{align*}
	\partial_{t}\mcl{T}\mbf{v}(t)=\bbl[\underbrace{-\mcl{T}-\mcl{R}_{2}}_{\mcl{A}} \enspace \underbrace{-\mcl{T}\otimes(r\mcl{T}+\mcl{R}_{1})}_{\mcl{B}}\bbr]\slbmat{\mbf{v}(t)\\\mbf{v}(t)\!\otimes\!\mbf{v}(t)}.
\end{align*}
Applying the conditions of Theorem~\ref{thm:stability_LPI} with $\epsilon=10^{-4}$, $\delta=10^{-6}$, the proposed algorithm is able to find a global Lyapunov stability proof for any $r\in[-0.6500,0.7202]$. Again, although we have no analytic stability for this modified KSE, we note that for the Lyapunov functional candidate
\begin{equation*}
	V(u)=\int_{0}^{1}e^{3rs}u(s)^2ds,
\end{equation*}
the temporal derivative along solutions to the PDE satisfies
\begin{equation*}\Resize{\linewidth}{
		\dot{V}(u)
		=\!\int_{0}^{1}\!\!e^{3rs}\bl[-[81r^{4}\!+\!9r^2]u(s)^2 \!+\![36r^2\!+\!2] u_{s}(s)^2\!-\!2u_{ss}(s)^2\br] ds.}
\end{equation*}
Invoking the Poincar\'e inequality, this derivative can be proven to be nonpositive whenever $|r|\leq 0.3608$, though again, this bound is clearly conservative. See Appendix~\ref{appx:KSE} for more details.

\section{Conclusion}

In this paper, we have proposed a new, compact PIE representation of scalar-valued quadratic PDEs, expressed in terms of PI operators on states $\mbf{v}$ and $\mbf{v}\otimes\mbf{v}$. In order to derive this representation, we first defined a new class of PI operators $\Pi_{2^2}$, acting on states $\mbf{v}\otimes\mbf{v}$. We derived formulae for computing the tensor product $\mcl{Q}:=\mcl{T}\otimes\mcl{R}$ of standard PI operators $\mcl{T},\mcl{R}\in\Pi_{2}$, proving that the resulting operator $\mcl{Q}$ belongs to the newly defined class $\Pi_{2^2}$. Using this tensor product, we then derived expressions for operators $\mcl{A}\in\Pi_{3}$ and $\mcl{B}\in\Pi_{2^{2}}$ defining the PIE representation associated to a particular quadratic PDE. Finally, using this PIE representation, we proposed a method for testing existence of a quadratic Lyapunov functional certifying stability of the PDE, posing this test as an optimization problem that can be solved with semidefinite programming. While currently limited to scalar quadratic PDEs, the results of this paper may be extended to higher-degree polynomial PDEs and higher-degree Lyapunov functionals.

\bibliographystyle{IEEEtran}
\bibliography{bibfile}

\begin{thebibliography}{10}
\providecommand{\url}[1]{#1}
\csname url@samestyle\endcsname
\providecommand{\newblock}{\relax}
\providecommand{\bibinfo}[2]{#2}
\providecommand{\BIBentrySTDinterwordspacing}{\spaceskip=0pt\relax}
\providecommand{\BIBentryALTinterwordstretchfactor}{4}
\providecommand{\BIBentryALTinterwordspacing}{\spaceskip=\fontdimen2\font plus
\BIBentryALTinterwordstretchfactor\fontdimen3\font minus
  \fontdimen4\font\relax}
\providecommand{\BIBforeignlanguage}[2]{{%
\expandafter\ifx\csname l@#1\endcsname\relax
\typeout{** WARNING: IEEEtran.bst: No hyphenation pattern has been}%
\typeout{** loaded for the language `#1'. Using the pattern for}%
\typeout{** the default language instead.}%
\else
\language=\csname l@#1\endcsname
\fi
#2}}
\providecommand{\BIBdecl}{\relax}
\BIBdecl

\bibitem{valmorbida2014semi}
G.~Valmorbida, M.~Ahmadi, and A.~Papachristodoulou, ``Semi-definite programming
  and functional inequalities for distributed parameter systems,'' in
  \emph{Proceedings of the 53rd IEEE Conference on Decision and Control
  (CDC)}.\hskip 1em plus 0.5em minus 0.4em\relax IEEE, 2014, pp. 4304--4309.

\bibitem{goulart2012SOS_stability_fluid}
P.~J. Goulart and S.~Chernyshenko, ``Global stability analysis of fluid flows
  using sum-of-squares,'' \emph{Physica D: Nonlinear Phenomena}, vol. 241,
  no.~6, pp. 692--704, 2012.

\bibitem{huang2015SOS_stability_fluid}
D.~Huang, S.~Chernyshenko, P.~Goulart, D.~Lasagna, O.~Tutty, and F.~Fuentes,
  ``Sum-of-squares of polynomials approach to nonlinear stability of fluid
  flows: an example of application,'' \emph{Proceedings of the Royal Society A:
  Mathematical, Physical and Engineering Sciences}, vol. 471, no. 2183, p.
  20150622, 2015.

\bibitem{ahmadi2019framework}
M.~Ahmadi, G.~Valmorbida, D.~Gayme, and A.~Papachristodoulou, ``A framework for
  input--output analysis of wall-bounded shear flows,'' \emph{Journal of Fluid
  Mechanics}, vol. 873, pp. 742--785, 2019.

\bibitem{fuentes2022SOS_stability_fluid}
F.~Fuentes, D.~Goluskin, and S.~Chernyshenko, ``Global stability of fluid flows
  despite transient growth of energy,'' \emph{Physical Review Letters}, vol.
  128, no.~20, p. 204502, 2022.

\bibitem{goluskin2019Energy_Kuramoto_Sivashinsky}
D.~Goluskin and G.~Fantuzzi, ``Bounds on mean energy in the
  {Kuramoto}--{Sivashinsky} equation computed using semidefinite programming,''
  \emph{Nonlinearity}, vol.~32, no.~5, p. 1705, 2019.

\bibitem{fridman2016new}
E.~Fridman and M.~Terushkin, ``New stability and exact observability conditions
  for semilinear wave equations,'' \emph{Automatica}, vol.~63, pp. 1--10, 2016.

\bibitem{papachristodoulou2006SOS_stability_PDE}
A.~Papachristodoulou and M.~M. Peet, ``On the analysis of systems described by
  classes of partial differential equations,'' in \emph{Proceedings of the 45th
  IEEE Conference on Decision and Control (CDC)}.\hskip 1em plus 0.5em minus
  0.4em\relax IEEE, 2006, pp. 747--752.

\bibitem{valmorbida2015stability}
G.~Valmorbida, M.~Ahmadi, and A.~Papachristodoulou, ``Stability analysis for a
  class of partial differential equations via semidefinite programming,''
  \emph{IEEE Transactions on Automatic Control}, vol.~61, no.~6, pp.
  1649--1654, 2015.

\bibitem{meyer2015stability}
E.~Meyer and M.~M. Peet, ``Stability analysis of parabolic linear {PDE}s with
  two spatial dimensions using {Lyapunov} method and {SOS},'' in
  \emph{Proceedings of the 54th IEEE Conference on Decision and Control
  (CDC)}.\hskip 1em plus 0.5em minus 0.4em\relax IEEE, 2015, pp. 1884--1890.

\bibitem{mironchenko2019ISS_nonlinear_PDE}
A.~Mironchenko, I.~Karafyllis, and M.~Krstic, ``Monotonicity methods for
  input-to-state stability of nonlinear parabolic {PDE}s with boundary
  disturbances,'' \emph{SIAM Journal on Control and Optimization}, vol.~57,
  no.~1, pp. 510--532, 2019.

\bibitem{korda2022moments}
M.~Korda, D.~Henrion, and J.~B. Lasserre, ``Moments and convex optimization for
  analysis and control of nonlinear {PDE}s,'' in \emph{Numerical Control: Part
  A}.\hskip 1em plus 0.5em minus 0.4em\relax Elsevier, 2022, vol.~23, pp.
  339--366.

\bibitem{tacchi2022moments}
M.~Tacchi, ``Convergence of {L}asserre’s hierarchy: the general case,''
  \emph{Optimization Letters}, vol.~16, no.~3, pp. 1015--1033, 2022.

\bibitem{marx2018moments}
S.~Marx, T.~Weisser, D.~Henrion, and J.~Lasserre, ``A moment approach for
  entropy solutions to nonlinear hyperbolic {PDE}s,'' \emph{arXiv preprint
  arXiv:1807.02306}, 2018.

\bibitem{shivakumar2022GPDE_Arxiv}
S.~Shivakumar, A.~Das, S.~Weiland, and M.~Peet, ``Extension of the partial
  integral equation representation to {GPDE} input-output systems,''
  \emph{arxiv eprint:2205.03735}, 2022.

\bibitem{peet2019PIE_representation}
M.~M. Peet, S.~Shivakumar, A.~Das, and S.~Weiland, ``Discussion paper: A new
  mathematical framework for representation and analysis of coupled {PDE}s,''
  \emph{IFAC Proceedings Volumes}, vol.~52, no.~2, pp. 132--137, 2019.

\bibitem{shivakumar2021PIETOOLS}
S.~Shivakumar, A.~Das, and M.~Peet, ``{PIETOOLS} 2020a: User manual,''
  \emph{arXiv preprint arXiv:2101.02050}, 2021.

\bibitem{CodeOcean_QuadraticPIE}
D.~Jagt, P.~Seiler, and M.~Peet, ``An algorithm for testing global stability of
  {1D} quadratic {PDE}s using semidefinite programming,''
  10.24433/CO.9422919.v4, 2023.

\end{thebibliography}

\clearpage


\onecolumn

\appendix	
	
In this appendix, we prove a number of results regarding tensor products and compositions of partial integral operators. In order to prove these results, we will extensively make use of the following identities for $F\in L_2[[a,b]^2]$:
\begin{align}\label{eq:integral_identities}
		\int_{a}^{s}\!\!\int_{\theta}^{s}\! F(\theta,\eta)d\eta d\theta
		&=\!\int_{a}^{s}\!\!\int_{\eta}^{s}\! F(\eta,\theta)d\theta d\eta
		=\!\int_{a}^{s}\!\!\int_{a}^{\theta}\! F(\eta,\theta)d\eta d\theta,	\\
		\int_{s}^{b}\!\!\int_{\theta}^{b}\! F(\theta,\eta)d\eta d\theta
		&=\!\int_{s}^{b}\!\!\int_{\eta}^{b}\! F(\eta,\theta)d\theta d\eta
		=\!\int_{s}^{b}\!\!\int_{s}^{\theta}\! F(\eta,\theta)d\eta d\theta.	\notag
\end{align}

\subsection{Proof of Proposition~\ref{prop:tensor_prod_PIs}}\label{appx:proof_tensor_prod_PIs}

\begin{prop}\label{prop:tensor_prod_PIs_appx}
	Let $Q:=\{Q_{1},Q_{2}\}\in\mcl{N}_{2}$ and $R:=\{R_{1},R_{2}\}\in\mcl{N}_{2}$ define 2-PI operators $\mcl{Q}=\mcl{P}_{\{0,Q_1,Q_2\}},\mcl{R}=\mcl{P}_{\{0,R_1,R_2\}}\in\Pi_{2}$. Let $B:=[B_{1},B_{2},B_{3}]\in\mcl{N}_{2^2}$, where
	\begin{align*}
		B_{1}(s,\theta,\eta)&:=Q_{1}(s,\theta)R_{1}(s,\eta) +Q_{1}(s,\eta)R_{1}(s,\theta),	\\
		B_{2}(s,\theta,\eta)&:=Q_{2}(s,\theta)R_{1}(s,\eta) +Q_{1}(s,\eta)R_{2}(s,\theta),	\\
		B_{1}(s,\theta,\eta)&:=Q_{2}(s,\theta)R_{2}(s,\eta) +Q_{2}(s,\eta)R_{2}(s,\theta),	
	\end{align*}
	and define $\mcl{B}:=\mcl{P}[B]\in\Pi_{2^2}$. Then, for any $\mbf{v}\in L_2[a,b]$,
	\begin{align*}
		(\mcl{Q}\mbf{v})(s)(\mcl{R}\mbf{v})(s)=(\mcl{B}[\mbf{v}\otimes\mbf{v}])(s).
	\end{align*}
\end{prop}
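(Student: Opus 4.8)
The plan is to expand the product $(\mcl{Q}\mbf{v})(s)(\mcl{R}\mbf{v})(s)$ directly, using the definitions of the 2-PI operators $\mcl{Q}$ and $\mcl{R}$, and then reorganize the resulting double integrals into the three-piece form of Definition~\ref{defn:tensorPI}. First I would write
\begin{align*}
	(\mcl{Q}\mbf{v})(s)(\mcl{R}\mbf{v})(s)&=\bbl(\int_{a}^{s}\! Q_{1}(s,\theta)\mbf{v}(\theta)d\theta +\int_{s}^{b}\! Q_{2}(s,\theta)\mbf{v}(\theta)d\theta\bbr)\\
	&\quad\times\bbl(\int_{a}^{s}\! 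R_{1}(s,\eta)\mbf{v}(\eta)d\eta +\int_{s}^{b}\! R_{2}(s,\eta)\mbf{v}(\eta)d\eta\bbr),
\end{align*}
which upon multiplying out yields four terms: a $(\mcl{P}_{\{Q_1,0\}},\mcl{P}_{\{R_1,0\}})$ term integrated over $[a,s]^2$, a $(\mcl{P}_{\{Q_2,0\}},\mcl{P}_{\{R_1,0\}})$ term over $[s,b]\times[a,s]$, a $(\mcl{P}_{\{Q_1,0\}},\mcl{P}_{\{R_2,0\}})$ term over $[a,s]\times[s,b]$, and a $(\mcl{P}_{\{Q_2,0\}},\mcl{P}_{\{R_2,0\}})$ term over $[s,b]^2$. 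The integrand of each term is a product $Q_{i}(s,\theta)R_{j}(s,\eta)\mbf{v}(\theta)\mbf{v}(\eta)$.

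Next I would handle each of the four terms so as to bring every one of them into one of the three canonical domains $\{a\le\eta\le\theta\le s\}$, $\{a\le\eta\le s\le\theta\le b\}$, $\{s\le\eta\le\theta\le b\}$ used in Definition~\ref{defn:tensorPI}. For the first term (domain $[a,s]^2$), split the square $\{a\le\theta\le s,\ a\le\eta\le s\}$ along the diagonal into $\{a\le\eta\le\theta\le s\}$ and $\{a\le\theta\le\eta\le s\}$; on the second piece apply the identity~\eqref{eq:identity_proof} (equivalently~\eqref{eq:integral_identities}) to relabel $\theta\leftrightarrow\eta$, so that both pieces live on $\{a\le\eta\le\theta\le s\}$ and the combined integrand becomes $Q_{1}(s,\theta)R_{1}(s,\eta)+Q_{1}(s,\eta)R_{1}(s,\theta)=B_{1}(s,\theta,\eta)$ times $\mbf{v}(\theta)\mbf{v}(\eta)$. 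The fourth term is handled symmetrically on $[s,b]^2$, yielding $B_{3}(s,\theta,\eta)=Q_{2}(s,\theta)R_{2}(s,\eta)+Q_{2}(s,\eta)R_{2}(s,\theta)$ over $\{s\le\eta\le\theta\le b\}$. The middle two terms together already cover the product region $\{a\le\eta\le s\le\theta\le b\}$: the $(\mcl{P}_{\{Q_2,0\}},\mcl{P}_{\{R_1,0\}})$ term contributes $Q_{2}(s,\theta)R_{1}(s,\eta)$ with $\theta\in[s,b]$, $\eta\in[a,s]$, and the $(\mcl{P}_{\{Q_1,0\}},\mcl{P}_{\{R_2,0\}})$ term — after swapping dummy variable names $\theta\leftrightarrow\eta$ so its first argument sits in $[s,b]$ — contributes $Q_{1}(s,\eta)R_{2}(s,\theta)$ on the same region, so they combine to $B_{2}(s,\theta,\eta)=Q_{2}(s,\theta)R_{1}(s,\eta)+Q_{1}(s,\eta)R_{2}(s,\theta)$. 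Collecting the three contributions over the three canonical domains reproduces exactly $(\mcl{B}[\mbf{v}\otimes\mbf{v}])(s)$ with parameters $[B_1,B_2,B_3]$, using $[\mbf{v}\otimes\mbf{v}](\theta,\eta)=\mbf{v}(\theta)\mbf{v}(\eta)$.

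The main obstacle — really the only nontrivial point — is bookkeeping: correctly tracking which dummy variable plays the role of $\theta$ versus $\eta$ after each diagonal split and each application of the relabeling identity, and making sure the orientation conventions (inner integral upper/lower limits) match Definition~\ref{defn:tensorPI}. There is a subtlety in that the middle domain $\{a\le\eta\le s\le\theta\le b\}$ is a rectangle rather than a triangle, so no diagonal splitting is needed there — only a variable renaming in one of the two contributing terms — whereas the corner-square terms each require one diagonal split plus one relabeling. One also needs the mild integrability fact that all the integrals converge (which follows from $Q_i,R_j\in L_2[[a,b]^2]$ and $\mbf{v}\in L_2[a,b]$ via Cauchy–Schwarz, and from $\mbf{v}\otimes\mbf{v}\in L_2[[a,b]^2]$ with $B_k\in L_2[[a,b]^3]$) so that Fubini applies and all rearrangements are legitimate. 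Since the computation is symmetric in the three cases, I would carry out the $[a,s]^2$ case in full detail and indicate that the $[s,b]^2$ case is identical under $a\leftrightarrow b$ reflection and the middle case is simpler.
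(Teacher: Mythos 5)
Your proposal is correct and follows essentially the same route as the paper's proof: expand the product into four double integrals, split the two corner squares $[a,s]^2$ and $[s,b]^2$ along the diagonal, apply the relabeling identity~\eqref{eq:identity_proof} to fold each off-triangle piece back onto the canonical domain, and merge the two rectangular cross terms (after one variable swap) into $B_2$. The only addition beyond the paper's argument is your explicit remark on integrability and Fubini, which is a harmless (and mildly useful) strengthening.
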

\begin{proof}
	Let $\mbf{v}\in L_2[a,b]$ be arbitrary. Expanding the product $(\mcl{Q}\mbf{v})(s)(\mcl{R}\mbf{v})(s)$, and applying the identities iN~\eqref{eq:integral_identities}, we find that
	{\small
		\begin{align*}
			&(\mcl{Q}\mbf{v})(s)(\mcl{R}\mbf{v})(s)
			=\bbbl[\int_{a}^{s}Q_{1}(s,\theta)\mbf{v}(\theta)d\theta +\int_{s}^{b}Q_{2}(s,\theta)\mbf{v}(\theta)d\theta \bbbr]	\bbbl[\int_{a}^{s}R_{1}(s,\eta)\mbf{v}(\eta)d\eta +\int_{s}^{b}R_{2}(s,\eta)\mbf{v}(\eta)d\eta\bbbr]	\\
			&=\int_{a}^{s}\!\!\int_{a}^{s}Q_{1}(s,\theta)\mbf{v}(\theta)R_{1}(s,\eta)\mbf{v}(\eta)d\eta d\theta	
			+\int_{s}^{b}\!\!\int_{a}^{s}Q_{2}(s,\theta)\mbf{v}(\theta)R_{1}(s,\eta)\mbf{v}(\eta)d\eta d\theta	\\
			&\qquad+\int_{a}^{s}\!\!\int_{s}^{b}Q_{1}(s,\theta)\mbf{v}(\theta)R_{2}(s,\eta)\mbf{v}(\eta)d\eta d\theta	 +\int_{s}^{b}\!\!\int_{s}^{b}Q_{2}(s,\theta)\mbf{v}(\theta)R_{2}(s,\eta)\mbf{v}(\eta)d\eta d\theta	\\
			&=\int_{a}^{s}\!\!\int_{a}^{\theta}Q_{1}(s,\theta)R_{1}(s,\eta)\mbf{v}(\theta)\mbf{v}(\eta)d\eta d\theta	+\int_{a}^{s}\!\!\int_{\theta}^{s}Q_{1}(s,\theta)R_{1}(s,\eta)\mbf{v}(\theta)\mbf{v}(\eta)d\eta d\theta	
			+\int_{s}^{b}\!\!\int_{a}^{s}Q_{2}(s,\theta)R_{1}(s,\eta)\mbf{v}(\theta)\mbf{v}(\eta)d\eta d\theta	\\
			&\qquad+\int_{s}^{b}\!\!\int_{a}^{s}Q_{1}(s,\eta)R_{2}(s,\theta)\mbf{v}(\theta)\mbf{v}(\eta)d\eta d\theta	 +\int_{s}^{b}\!\!\int_{s}^{\theta}Q_{2}(s,\theta)R_{2}(s,\eta)\mbf{v}(\theta)\mbf{v}(\eta)d\eta d\theta	 \int_{s}^{b}\!\!\int_{\theta}^{b}Q_{2}(s,\theta)R_{2}(s,\eta)\mbf{v}(\theta)\mbf{v}(\eta)d\eta d\theta	\\
			&=\int_{a}^{s}\!\!\int_{a}^{\theta} B_{1}(s,\theta,\eta)[\mbf{v}\otimes\mbf{v}](\theta,\eta) d\eta d\theta	 +\!\int_{s}^{b}\!\!\int_{a}^{s}B_{2}(s,\theta,\eta)[\mbf{v}\otimes\mbf{v}](\theta,\eta)d\eta d\theta +\!\int_{s}^{b}\!\!\int_{s}^{\theta}B_{3}(s,\theta,\eta)[\mbf{v}\otimes\mbf{v}](\theta,\eta)d\eta d\theta
			=(\mcl{B}[\mbf{v}\otimes\mbf{v}])(s).
	\end{align*}}
	We conclude that $(\mcl{Q}\mbf{v})(s)(\mcl{R}\mbf{v})(s)=(\mcl{B}[\mbf{v}\otimes\mbf{v}])(s)$.
\end{proof}

\subsection{Proof of Proposition~\ref{prop:1dPI_2dPI_comp}}\label{appx:proof_1dPI_2dPI_comp}
	
	\begin{prop}\label{prop:1dPI_2dPI_comp_appx}
		Let $\mcl{Q}=\mcl{P}[Q]\in\Pi_{3}$ and $\mcl{B}=\mcl{P}[B]\in\Pi_{2^{2}}$ be defined by parameters $Q=\{Q_{0},Q_{1},Q_{2}\}\in\mcl{N}_{3}$ and $B=[B_{1},B_{2},B_{3}]\in\mcl{N}_{2^2}$. Define
		\begin{align*}
				&G_{1}(s,\theta,\eta):=Q_{0}(s)B_{1}(s,\theta,\eta) +\!\int_{a}^{\eta}\!\! R_{13}(s,\zeta,\theta,\eta) d\zeta	
				+\!\int_{\eta}^{\theta}\!\! Q_{12}(s,\zeta,\theta,\eta) d\zeta +\!\int_{\theta}^{s}\!\! Q_{11}(s,\zeta,\theta,\eta) d\zeta +\!\int_{s}^{b}\! Q_{21}(s,\zeta,\theta,\eta)  d\zeta	,	\\
				&G_{2}(s,\theta,\eta):=Q_{0}(s)B_{2}(s,\theta,\eta) +\!\int_{a}^{\eta}\!\! R_{13}(s,\zeta,\theta,\eta) d\zeta
				+\!\int_{\eta}^{s}\!\! R_{12}(s,\zeta,\theta,\eta) d\zeta	+\!\int_{s}^{\theta}\!\! R_{22}(s,\zeta,\theta,\eta) d\zeta
				+\!\int_{\theta}^{b}\!\! R_{21}(s,\zeta,\theta,\eta) d\zeta,	\\
				&G_{3}(s,\theta,\eta):=Q_{0}(s)B_{3}(s,\theta,\eta) +\!\int_{a}^{s}\!\! R_{13}(s,\zeta,\theta,\eta) d\zeta
				+\!\int_{s}^{\eta}\!\! R_{23}(s,\zeta,\theta,\eta) d\zeta	+\!\int_{\eta}^{\theta}\!\! R_{22}(s,\zeta,\theta,\eta) d\zeta
				+\!\int_{\theta}^{b}\!\! R_{21}(s,\zeta,\theta,\eta) d\zeta,
		\end{align*}
		where $R_{ij}(s,\zeta,\theta,\eta):=Q_{i}(s,\zeta)B_{j}(\zeta,\theta,\eta)$, and let $\mcl{G}:=\mcl{P}[G]$, where $G:=[G_{1},G_{2},G_{3}]\in\mcl{N}_{2^2}$. Then, for any $\mbf{v}\in L_2$,
		\begin{equation*}
			\mcl{Q}(\mcl{B}\mbf{v})(s)=(\mcl{G}\mbf{v})(s).
		\end{equation*}
	\end{prop}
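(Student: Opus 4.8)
The plan is to prove Proposition~\ref{prop:1dPI_2dPI_comp_appx} by direct computation: compose the two operators, expand all nested integrals, and then reorganize the resulting iterated integrals into the three canonical regions $\{a\le\eta\le\theta\le s\}$, $\{s\le\theta$, $a\le\eta\le s\}$, and $\{s\le\eta\le\theta\le b\}$ that define an operator in $\Pi_{2^2}$. First I would write out $(\mcl{B}\mbf{v})(\zeta)$ using Definition~\ref{defn:tensorPI} (three double integrals in $\theta,\eta$ with kernels $B_1,B_2,B_3$ over the regions $a\le\eta\le\theta\le\zeta$, $\zeta\le\theta$ with $a\le\eta\le\zeta$, and $\zeta\le\eta\le\theta$), and then apply $\mcl{Q}$ in its $\Pi_3$ form, which produces the multiplier term $Q_0(s)(\mcl{B}\mbf{v})(s)$ plus $\int_a^s Q_1(s,\zeta)(\mcl{B}\mbf{v})(\zeta)\,d\zeta$ plus $\int_s^b Q_2(s,\zeta)(\mcl{B}\mbf{v})(\zeta)\,d\zeta$. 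The multiplier term immediately contributes the $Q_0(s)B_j(s,\theta,\eta)$ summands appearing in each $G_j$, so the real work is the two integral terms.

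The key step is a careful accounting of the iterated-integration region for each of the six products $\int_a^s Q_1(s,\zeta)(\cdot)d\zeta$ and $\int_s^b Q_2(s,\zeta)(\cdot)d\zeta$ paired with each of the three $B_j$-pieces of $\mcl B$. For example, the term $\int_a^s Q_1(s,\zeta)\int_a^\zeta\int_a^\theta Q_1(s,\zeta)B_1(\zeta,\theta,\eta)\mbf v(\theta)\mbf v(\eta)\,d\eta\,d\theta\,d\zeta$ lives over $\{a\le\eta\le\theta\le\zeta\le s\}$; integrating out $\zeta$ (which ranges over $[\theta,s]$) contributes the $\int_\theta^s R_{11}(s,\zeta,\theta,\eta)d\zeta$ term inside $G_1$. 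Doing this systematically for all combinations — and using the Fubini-type identities in~\eqref{eq:integral_identities} to swap the roles of $\theta$ and $\eta$ whenever a piece falls into a region where $\eta>\theta$ — reassembles everything into exactly the three target regions, with the $\zeta$-integral limits dictated by the geometry of each contributing simplex. I would organize this as a table or as three blocks, one per output region, collecting which $(Q_i,B_j)$ pairs land in $\{a\le\eta\le\theta\le s\}$ (giving $G_1$), which land in $\{s\le\theta,\ a\le\eta\le s\}$ (giving $G_2$), and which land in $\{s\le\eta\le\theta\le b\}$ (giving $G_3$), reading off the $\zeta$-limits $\int_a^\eta,\int_\eta^\theta,\int_\theta^s,\int_s^b$ (resp.\ the limits for $G_2,G_3$) from where $\zeta$ is pinned relative to $a,\eta,\theta,s,b$.

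The main obstacle I expect is purely bookkeeping rather than conceptual: there are six composite terms, several of which split into two sub-regions when $\zeta$ can be either above or below $\theta$ (or $\eta$), and one must correctly apply~\eqref{eq:integral_identities} to symmetrize the $B_j$ kernels and merge sub-pieces that differ only by the $\theta\leftrightarrow\eta$ relabeling. Getting every limit of integration right — and verifying that no region is double-counted or omitted — is the delicate part; a sign or limit error anywhere propagates into a wrong $G_j$. The cleanest way to guard against this is to track, for each term, the full ordered chain of inequalities among $\{a,\eta,\theta,\zeta,s,b\}$ before and after integrating out $\zeta$, and to check at the end that the union of all regions, after symmetrization, is exactly the disjoint union of the three canonical $\Pi_{2^2}$ regions with total "multiplicity one" on each. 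Once the regions match, comparing kernels term-by-term with the stated $G_1,G_2,G_3$ is immediate, completing the proof.
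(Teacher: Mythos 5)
Your proposal matches the paper's proof essentially exactly: the paper decomposes $\mcl{Q}=\mcl{P}_{\{Q_{0},0,0\}}+\mcl{P}_{\{0,Q_{1},0\}}+\mcl{P}_{\{0,0,Q_{2}\}}$, observes that the multiplier part immediately yields the $Q_{0}B_{j}$ terms, and then carries out precisely the region-by-region Fubini bookkeeping you describe to read off the $\zeta$-limits for each $(Q_{i},B_{j})$ pair. One small caveat: since the argument of $\mcl{B}$ is a general (not necessarily symmetric) element of $L_2[[a,b]^2]$, no $\theta\leftrightarrow\eta$ symmetrization of the $B_{j}$ kernels is needed or permitted here --- all three pieces of $\mcl{B}$ already enforce $\eta\leq\theta$, so the only swaps required are reorderings of the $\zeta$-integral against $\theta$ and $\eta$, exactly as in the identities~\eqref{eq:integral_identities}.
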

	
\begin{proof}
	In order to prove this result, we note that we can decompose $\mcl{Q}=\mcl{P}_{\{Q_{0},Q_{1},Q_{2}\}}=\mcl{P}_{\{Q_{0},0,0\}}+\mcl{P}_{\{0,Q_{1},0\}}+\mcl{P}_{\{0,0,Q_{2}\}}$. Here, it is easy to see that
	\begin{equation*}\Resize{\textwidth}{
		(\mcl{P}_{\{0,Q_1,0\}}\mcl{B}\mbf{w})(s)
		=		\int_{a}^{s}\!\!\int_{a}^{\theta}\!Q_{0}(s)B_{1}(s,\theta,\eta)\mbf{w}(\theta,\eta)d\eta d\theta +\!\int_{s}^{b}\!\!\int_{a}^{s}\!Q_{0}(s)B_{2}(s,\theta,\eta)\mbf{w}(\theta,\eta)d\eta d\theta +\!\int_{s}^{b}\!\!\int_{s}^{\theta}\!Q_{0}(s)B_{3}(s,\theta,\eta)\mbf{w}(\theta,\eta)d\eta d\theta.}
	\end{equation*}
	In addition, using the identities in~\eqref{eq:integral_identities}, we find that
	{\small
	\begin{align*}
		&(\mcl{P}_{\{0,Q_1,0\}}\mcl{B}\mbf{w})(s)
		=\int_{a}^{s}Q_{1}(s,\zeta)\bbbl[
		\int_{a}^{\zeta}\!\!\int_{a}^{\theta}\!B_{1}(\zeta,\theta,\eta)\mbf{w}(\theta,\eta)d\eta d\theta +\!\int_{\zeta}^{b}\!\!\int_{a}^{\zeta}\!B_{2}(\zeta,\theta,\eta)\mbf{w}(\theta,\eta)d\eta d\theta +\!\int_{\zeta}^{b}\!\!\int_{\zeta}^{\theta}\!B_{3}(\zeta,\theta,\eta)\mbf{w}(\theta,\eta)d\eta d\theta
		\bbbr]d\zeta	\\
		&=\int_{a}^{s}\!\!\int_{a}^{\zeta}\!\!\int_{a}^{\theta}\!\!Q_{1}(s,\zeta)B_{1}(s,\theta,\eta)\mbf{w}(\theta,\eta)d\eta d\theta d\zeta
		+\!\int_{a}^{s}\!\!\int_{\zeta}^{s}\!\!\int_{a}^{\zeta}\!\!Q_{1}(s,\zeta)B_{2}(\zeta,\theta,\eta)\mbf{w}(\theta,\eta)d\eta d\theta d\zeta +\!\int_{a}^{s}\!\!\int_{s}^{b}\!\!\int_{a}^{\zeta}\!\!Q_{1}(s,\zeta)B_{2}(\zeta,\theta,\eta)\mbf{w}(\theta,\eta)d\eta d\theta d\zeta	\\
		&\qquad+\!\int_{a}^{s}\!\!\int_{\zeta}^{s}\!\!\int_{\zeta}^{\theta}\!\!Q_{1}(s,\zeta)B_{3}(\zeta,\theta,\eta)\mbf{w}(\theta,\eta)d\eta d\theta d\zeta +\!\int_{a}^{s}\!\!\int_{s}^{b}\!\!\int_{\zeta}^{\theta}\!\!Q_{1}(s,\zeta)B_{3}(\zeta,\theta,\eta)\mbf{w}(\theta,\eta)d\eta d\theta d\zeta	\\
		&=\int_{a}^{s}\!\!\int_{\theta}^{s}\!\!\int_{a}^{\theta}\!\!Q_{1}(s,\zeta)B_{1}(s,\theta,\eta)\mbf{w}(\theta,\eta)d\eta d\zeta d\theta	
		+\!\int_{a}^{s}\!\!\int_{a}^{\theta}\!\!\int_{a}^{\zeta}\!\!Q_{1}(s,\zeta)B_{2}(\zeta,\theta,\eta)\mbf{w}(\theta,\eta)d\eta d\zeta d\theta +\!\int_{s}^{b}\!\!\int_{a}^{s}\!\!\int_{a}^{\zeta}\!\!Q_{1}(s,\zeta)B_{2}(\zeta,\theta,\eta)\mbf{w}(\theta,\eta)d\eta d\zeta d\theta	\\
		&\qquad+\!\int_{a}^{s}\!\!\int_{a}^{\theta}\!\!\int_{\zeta}^{\theta}\!\!Q_{1}(s,\zeta)B_{3}(\zeta,\theta,\eta)\mbf{w}(\theta,\eta)d\eta d\zeta d\theta +\!\int_{s}^{b}\!\!\int_{a}^{s}\!\!\int_{\zeta}^{\theta}\!\!Q_{1}(s,\zeta)B_{3}(\zeta,\theta,\eta)\mbf{w}(\theta,\eta)d\eta d\zeta d\theta	\\
		&=\int_{a}^{s}\!\!\int_{a}^{\theta}\!\!\int_{\theta}^{s}\!\!Q_{1}(s,\zeta)B_{1}(s,\theta,\eta)\mbf{w}(\theta,\eta)d\zeta d\eta d\theta
		+\!\int_{a}^{s}\!\!\int_{a}^{\theta}\!\!\int_{\eta}^{\theta}\!\!Q_{1}(s,\zeta)B_{2}(\zeta,\theta,\eta)\mbf{w}(\theta,\eta) d\zeta d\eta d\theta +\!\int_{s}^{b}\!\!\int_{a}^{s}\!\!\int_{\eta}^{s}\!\!Q_{1}(s,\zeta)B_{2}(\zeta,\theta,\eta)\mbf{w}(\theta,\eta)d\zeta d\eta d\theta	\\
		&\qquad+\!\int_{a}^{s}\!\!\int_{a}^{\theta}\!\!\int_{a}^{\eta}\!\!Q_{1}(s,\zeta)B_{3}(\zeta,\theta,\eta)\mbf{w}(\theta,\eta)d\zeta d\eta d\theta +\!\int_{s}^{b}\!\!\int_{a}^{s}\!\!\int_{\zeta}^{s}\!\!Q_{1}(s,\zeta)B_{3}(\zeta,\theta,\eta)\mbf{w}(\theta,\eta)d\eta d\zeta d\theta +\!\int_{s}^{b}\!\!\int_{a}^{s}\!\!\int_{s}^{\theta}\!\!Q_{1}(s,\zeta)B_{3}(\zeta,\theta,\eta)\mbf{w}(\theta,\eta)d\eta d\zeta d\theta	\\
		&=\int_{a}^{s}\!\!\int_{a}^{\theta}\bbbl[
		\int_{a}^{\eta}\!\!Q_{1}(s,\zeta)B_{3}(\zeta,\theta,\eta)d\zeta
		+\!\int_{\eta}^{\theta}\!\!Q_{1}(s,\zeta)B_{2}(\zeta,\theta,\eta) d\zeta
		+\!\int_{\theta}^{s}\!\!Q_{1}(s,\zeta)B_{1}(s,\theta,\eta)d\zeta
		\bbbr]\mbf{w}(\theta,\eta)d\eta d\theta	\\
		&\qquad+\!\int_{s}^{b}\!\!\int_{a}^{s}\bbbl[
		 \int_{a}^{\eta}\!\!Q_{1}(s,\zeta)B_{3}(\zeta,\theta,\eta)d\zeta
		 +\!\int_{\eta}^{s}\!\!Q_{1}(s,\zeta)B_{2}(\zeta,\theta,\eta)d\zeta
		 \bbbr]\mbf{w}(\theta,\eta)d\eta d\theta	
		+\!\int_{s}^{b}\!\!\int_{s}^{\theta}\bbbl[\int_{a}^{s}\!\!Q_{1}(s,\zeta)B_{3}(\zeta,\theta,\eta)d\zeta\bbbr]\mbf{w}(\theta,\eta) d\eta d\theta.
	\end{align*}}
	Similarly
	{\small	
	\begin{align*}
		&(\mcl{P}_{\{0,0,Q_1\}}\mcl{B}\mbf{w})(s)
		=\int_{s}^{b}Q_{2}(s,\zeta)\bbbl[
		\int_{a}^{\zeta}\!\!\int_{a}^{\theta}\!B_{1}(\zeta,\theta,\eta)\mbf{w}(\theta,\eta)d\eta d\theta +\!\int_{\zeta}^{b}\!\!\int_{a}^{\zeta}\!B_{2}(\zeta,\theta,\eta)\mbf{w}(\theta,\eta)d\eta d\theta +\!\int_{\zeta}^{b}\!\!\int_{\zeta}^{\theta}\!B_{3}(\zeta,\theta,\eta)\mbf{w}(\theta,\eta)d\eta d\theta
		\bbbr]d\zeta	\\
		&=\int_{s}^{b}\!\!\int_{a}^{s}\!\!\int_{a}^{\theta}\!\!Q_{2}(s,\zeta)B_{1}(s,\theta,\eta)\mbf{w}(\theta,\eta)d\eta d\theta d\zeta +\!\int_{s}^{b}\!\!\int_{s}^{\zeta}\!\!\int_{a}^{\theta}\!\!Q_{2}(s,\zeta)B_{1}(s,\theta,\eta)\mbf{w}(\theta,\eta)d\eta d\theta d\zeta
		+\!\int_{s}^{b}\!\!\int_{\zeta}^{b}\!\!\int_{a}^{\zeta}\!\!Q_{2}(s,\zeta)B_{2}(\zeta,\theta,\eta)\mbf{w}(\theta,\eta)d\eta d\theta d\zeta	\\
		&\qquad+\!\int_{s}^{b}\!\!\int_{\zeta}^{b}\!\!\int_{\zeta}^{\theta}\!\!Q_{2}(s,\zeta)B_{3}(\zeta,\theta,\eta)\mbf{w}(\theta,\eta)d\eta d\theta d\zeta  \\
		&=\int_{a}^{s}\!\!\int_{s}^{b}\!\!\int_{a}^{\theta}\!\!Q_{2}(s,\zeta)B_{1}(s,\theta,\eta)\mbf{w}(\theta,\eta)d\eta d\zeta d\theta +\!\int_{s}^{b}\!\!\int_{\theta}^{b}\!\!\int_{a}^{\theta}\!\!Q_{2}(s,\zeta)B_{1}(s,\theta,\eta)\mbf{w}(\theta,\eta)d\eta d\zeta d\theta
		+\!\int_{s}^{b}\!\!\int_{s}^{\theta}\!\!\int_{a}^{\zeta}\!\!Q_{2}(s,\zeta)B_{2}(\zeta,\theta,\eta)\mbf{w}(\theta,\eta)d\eta d\zeta d\theta	\\
		&\qquad+\!\int_{s}^{b}\!\!\int_{s}^{\theta}\!\!\int_{\zeta}^{\theta}\!\!Q_{2}(s,\zeta)B_{3}(\zeta,\theta,\eta)\mbf{w}(\theta,\eta)d\eta d\zeta d\theta   \\
		&=\int_{a}^{s}\!\!\int_{a}^{\theta}\!\!\int_{s}^{b}\!\!Q_{2}(s,\zeta)B_{1}(s,\theta,\eta)\mbf{w}(\theta,\eta)d\zeta d\eta d\theta +\!\int_{s}^{b}\!\!\int_{a}^{\theta}\!\!\int_{\theta}^{b}\!\!Q_{2}(s,\zeta)B_{1}(s,\theta,\eta)\mbf{w}(\theta,\eta)d\zeta d\eta d\theta
		+\!\int_{s}^{b}\!\!\int_{s}^{\theta}\!\!\int_{a}^{s}\!\!Q_{2}(s,\zeta)B_{2}(\zeta,\theta,\eta)\mbf{w}(\theta,\eta)d\eta d\zeta d\theta	\\
		&\qquad+\!\int_{s}^{b}\!\!\int_{s}^{\theta}\!\!\int_{s}^{\zeta}\!\!Q_{2}(s,\zeta)B_{2}(\zeta,\theta,\eta)\mbf{w}(\theta,\eta)d\eta d\zeta d\theta
		+\!\int_{s}^{b}\!\!\int_{s}^{\theta}\!\!\int_{s}^{\eta}\!\!Q_{2}(s,\zeta)B_{3}(\zeta,\theta,\eta)\mbf{w}(\theta,\eta)d\zeta d\eta d\theta   \\
		&=\int_{a}^{s}\!\!\int_{a}^{\theta}\bbbl[\int_{s}^{b}\!\!Q_{2}(s,\zeta)B_{1}(s,\theta,\eta)d\zeta \bbbr]\mbf{w}(\theta,\eta) d\eta d\theta +\!\int_{s}^{b}\!\!\int_{a}^{s}\bbbl[
		\int_{s}^{\theta}\!\!Q_{2}(s,\zeta)B_{2}(\zeta,\theta,\eta)\mbf{w}(\theta,\eta)d\zeta d\eta d\theta
		+\!\int_{\theta}^{b}\!\!Q_{2}(s,\zeta)B_{1}(s,\theta,\eta)d\zeta
		\bbbr]\mbf{w}(\theta,\eta) d\eta d\theta	\\
		&\qquad+\!\int_{s}^{b}\!\!\int_{s}^{\theta}\bbbl[
		\int_{s}^{\eta}\!\!Q_{2}(s,\zeta)B_{3}(\zeta,\theta,\eta)d\zeta
		+\!\int_{\eta}^{\theta}\!\!Q_{2}(s,\zeta)B_{2}(\zeta,\theta,\eta)d\zeta
		+\!\int_{\theta}^{b}\!\!Q_{2}(s,\zeta)B_{1}(s,\theta,\eta)d\zeta
		\bbbr]\mbf{w}(\theta,\eta) d\eta d\theta
	\end{align*}}
	Combining these results, it is clear that
	\begin{align*}
		(\mcl{Q}\mcl{B}\mbf{w})(s)=(\mcl{P}_{\{Q_{0},0,0\}}\mcl{B}\mbf{w})(s) +(\mcl{P}_{\{0,Q_{1},0\}}\mcl{B}\mbf{w})(s) +(\mcl{P}_{\{0,0,Q_{2}\}}\mcl{B}\mbf{w})(s) =(\mcl{G}\mbf{w})(s)
	\end{align*}
\end{proof}

\subsection{Proof of Proposition~\ref{prop:PImap_ip}}\label{appx:proof_PImap_ip}
\begin{prop}\label{prop:PImap_ip_appx}
	Let PI operator and $\mcl{G}=\mcl{P}[G]\in\Pi_{2^{2}}$ be defined by parameters $G=[G_{1},G_{2},G_{3}]\in\mcl{N}_{2^2}$. Let
	\begin{align*}
		G(s,\theta,\eta):=G_{1}(s,\theta,\eta) +G_{2}(\theta,s,\eta) +G_{3}(\eta,s,\theta),
	\end{align*}
	and define $\mcl{K}:L_2[[a,b]^{3}]\to\R$ as
	\begin{align*}
		\mcl{K}\mbf{w}(s,\theta,\eta)=\int_{a}^{b}\!\!\int_{a}^{s}\!\!\int_{a}^{\theta}K(s,\theta,\eta)\mbf{w}(s,\theta,\eta)d\eta d\theta ds.
	\end{align*}
	Then, for any $\mbf{v}\in L_2[a,b]$, $\ip{\mbf{v}}{\mcl{G}[\mbf{v}\otimes\mbf{v}]}_{L_2}=\mcl{K}[\mbf{v}\otimes\mbf{v}\otimes\mbf{v}]$.
\end{prop}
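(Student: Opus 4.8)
The plan is to expand the left-hand side directly from the definition of the tensor-product PI operator $\mcl{G}=\mcl{P}[G]$ acting on the degree-2 distributed monomial $\mbf{v}\otimes\mbf{v}$, and then relabel the variables of integration in each of the three resulting triple integrals so that all of them are supported on one common ordered simplex $\{(s,\theta,\eta):a\le\eta\le\theta\le s\le b\}$. First I would write
\[
	\ip{\mbf{v}}{\mcl{G}[\mbf{v}\otimes\mbf{v}]}_{L_2}=\int_{a}^{b}\mbf{v}(s)\,(\mcl{G}[\mbf{v}\otimes\mbf{v}])(s)\,ds,
\]
and substitute the three-term formula for $(\mcl{G}\mbf{w})(s)$ from Definition~\ref{defn:tensorPI} with $\mbf{w}=\mbf{v}\otimes\mbf{v}$, using $(\mbf{v}\otimes\mbf{v})(\theta,\eta)=\mbf{v}(\theta)\mbf{v}(\eta)$. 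This produces exactly the three triple integrals appearing in the proof sketch: one with kernel $G_{1}(s,\theta,\eta)$ over the region $a\le\eta\le\theta\le s\le b$, one with kernel $G_{2}(s,\theta,\eta)$ over $a\le\eta\le s\le\theta\le b$, and one with kernel $G_{3}(s,\theta,\eta)$ over $a\le s\le\eta\le\theta\le b$, each multiplied by the cubic monomial $\mbf{v}(s)\mbf{v}(\theta)\mbf{v}(\eta)$.

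The first integral is already of the target form $\int_{a}^{b}\!\int_{a}^{s}\!\int_{a}^{\theta}$, so it contributes the kernel $G_{1}(s,\theta,\eta)$ verbatim. For the second integral I would reorder the triple integral so that $\theta$ becomes the outermost variable, and then rename the outermost, middle and innermost variables $s,\theta,\eta$ respectively; since the monomial $\mbf{v}(s)\mbf{v}(\theta)\mbf{v}(\eta)$ is invariant under this swap of $s$ and $\theta$, the term becomes $\int_{a}^{b}\!\int_{a}^{s}\!\int_{a}^{\theta}G_{2}(\theta,s,\eta)\,\mbf{v}(s)\mbf{v}(\theta)\mbf{v}(\eta)\,d\eta\,d\theta\,ds$. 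For the third integral, whose region is $a\le s\le\eta\le\theta\le b$, I would reorder it so that $\theta$ is outermost, $\eta$ is next and $s$ is innermost, and again rename these (outer, middle, inner) variables $s,\theta,\eta$; using that the cubic monomial is symmetric under all permutations of its three arguments, this becomes $\int_{a}^{b}\!\int_{a}^{s}\!\int_{a}^{\theta}G_{3}(\eta,s,\theta)\,\mbf{v}(s)\mbf{v}(\theta)\mbf{v}(\eta)\,d\eta\,d\theta\,ds$. Both relabellings are instances of the identities in~\eqref{eq:integral_identities}, applied with the remaining variable carried along as a passive parameter.

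Adding the three contributions and recognising that $[\mbf{v}\otimes\mbf{v}\otimes\mbf{v}](s,\theta,\eta)=\mbf{v}(s)\mbf{v}(\theta)\mbf{v}(\eta)$, I would conclude
\[
	\ip{\mbf{v}}{\mcl{G}[\mbf{v}\otimes\mbf{v}]}_{L_2}=\int_{a}^{b}\!\int_{a}^{s}\!\int_{a}^{\theta}\bl(G_{1}(s,\theta,\eta)+G_{2}(\theta,s,\eta)+G_{3}(\eta,s,\theta)\br)\,[\mbf{v}\otimes\mbf{v}\otimes\mbf{v}](s,\theta,\eta)\,d\eta\,d\theta\,ds,
\]
which is precisely $\mcl{K}[\mbf{v}\otimes\mbf{v}\otimes\mbf{v}]$ with the kernel $K$ as in the statement (equivalently, $\mcl{K}_{\text{lin}}[\mcl G][\mbf{v}\otimes\mbf{v}\otimes\mbf{v}]$ of Definition~\ref{defn:PImap_ip}). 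I do not anticipate any genuine obstacle: the only point requiring care is the bookkeeping in the changes of variables — checking that each relabelling maps its region bijectively onto the ordered simplex, and that the permuted arguments of $G_{1},G_{2},G_{3}$ and of the (fully symmetric) cubic monomial are tracked correctly.
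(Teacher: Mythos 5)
Your proposal is correct and follows essentially the same route as the paper's proof: expand the inner product into three triple integrals over the regions $a\le\eta\le\theta\le s$, $a\le\eta\le s\le\theta$, and $a\le s\le\eta\le\theta$, then relabel variables (using the symmetry of $\mbf{v}(s)\mbf{v}(\theta)\mbf{v}(\eta)$ and the identities in~\eqref{eq:integral_identities}) to collect everything on the single simplex $a\le\eta\le\theta\le s\le b$ with kernel $G_{1}(s,\theta,\eta)+G_{2}(\theta,s,\eta)+G_{3}(\eta,s,\theta)$. The only cosmetic difference is that the paper performs the relabelling of the $G_{3}$ term in two successive transpositions rather than one permutation; the resulting kernel is identical.
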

\begin{proof}
	Applying the identities in~\eqref{eq:integral_identities}, we find that for any $\mbf{v}\in L_2[a,b]$,
	\begin{align*}
		\ip{\mbf{v}}{\mcl{G}[\mbf{v}\otimes\mbf{v}]}_{L_2}
		&=\int_{a}^{b}\mbf{v}(s)(\mcl{G}[\mbf{v}\otimes\mbf{v}])(s) ds	\\
		&=\int_{a}^{b}\!\!\int_{a}^{s}\!\!\int_{a}^{\theta}G_{1}(s,\theta,\eta)\mbf{v}(s)\mbf{v}(\theta)\mbf{v}(\eta)\; d\eta d\theta ds	 +\int_{a}^{b}\!\!\int_{s}^{b}\!\!\int_{a}^{s}G_{2}(s,\theta,\eta)\mbf{v}(s)\mbf{v}(\theta)\mbf{v}(\eta)\; d\eta d\theta ds	\\ &\qquad +\int_{a}^{b}\!\!\int_{s}^{b}\!\!\int_{s}^{\theta}G_{3}(s,\theta,\eta)\mbf{v}(s)\mbf{v}(\theta)\mbf{v}(\eta)\; d\eta d\theta ds	\\
		&=\int_{a}^{b}\!\!\int_{a}^{s}\!\!\int_{a}^{\theta}\bl[G_{1}(s,\theta,\eta)+G_{2}(\theta,s,\eta)\br]\mbf{v}(s)\mbf{v}(\theta)\mbf{v}(\eta)\; d\eta d\theta ds	 +\int_{a}^{b}\!\!\int_{a}^{s}\!\!\int_{\theta}^{s}G_{3}(\theta,s,\eta)\mbf{v}(s)\mbf{v}(\theta)\mbf{v}(\eta)\; d\eta d\theta ds	\\
		&=\int_{a}^{b}\!\!\int_{a}^{s}\!\!\int_{a}^{\theta}[G_{1}(s,\theta,\eta)\!+\! G_{2}(\theta,s,\eta)+G_{3}(\eta,s,\theta)]	\mbf{v}(s)\mbf{v}(\theta)\mbf{v}(\eta)\; d\eta d\theta ds	\\
		&=\int_{a}^{b}\!\!\int_{a}^{s}\!\!\int_{a}^{\theta}K(s,\theta,\eta)[\mbf{v}\otimes\mbf{v}\otimes\mbf{v}](s,\theta,\eta)\; d\eta d\theta ds	\\
		&=\mcl{K}[\mbf{v}\otimes\mbf{v}\otimes\mbf{v}]
	\end{align*}
	We conclude that, for any $\mbf{v}\in L_2[a,b]$, $\ip{\mbf{v}}{\mcl{G}[\mbf{v}\otimes\mbf{v}]}_{L_2}=\mcl{K}[\mbf{v}\otimes\mbf{v}\otimes\mbf{v}]$
\end{proof}

\subsection{Conservative Stability Bound for Modified Korteweg-De Vries Equation}\label{appx:KdV}

Recall the modified Korteweg-De Vries (KdV) equation from Subsection~\ref{sec:examples:KdV},
\begin{align*}
	\textbf{PDE:}\quad u_{t}(t,s)&= -u_{sss}(t,s) + u(t,s)[r u(t,s)+6u_{s}(t,s)],	\\
	\textbf{BCs:}\quad u(t,0)&=0,\qquad u(t,1)=0,\qquad u_{s}(t,1)=0.
\end{align*}
To verify stability of this system, consider the Lyapunov functional candidate
\begin{equation*}
	V(u)=\ip{u}{\mcl{P}_{\{P_{0},0,0\}}u}_{L_2}=\int_{0}^{1}e^{\frac{r}{2}s}u(s)^2 ds,\qquad \text{where}\quad P_{0}(s)=e^{\frac{r}{2}s}.
\end{equation*}
Along solutions to the PDE, this functional satisfies
\begin{equation*}
	\dot{V}(u)=\ip{u_{t}}{\mcl{P}_{\{P_{0},0,0\}}u}_{L_2} +\ip{u}{\mcl{P}_{\{P_{0},0,0\}}u_{t}}_{L_2}	
	=2\ip{u}{\mcl{P}_{\{P_{0},0,0\}}u[ru+6u_{s}]}_{L_2} -2\ip{u}{\mcl{P}_{\{P_{0},0,0\}}u_{sss}}_{L_2}.
\end{equation*}
Here, using integration by parts, and invoking the boundary conditions $u(0)=u(1)=0$, we remark that
\begin{equation*}
	\ip{u}{\mcl{P}_{\{P_{0},0,0\}}u[ru+6u_{s}]}_{L_2}	
	=\int_{0}^{1}e^{\frac{r}{2}s}[6u(s)^2u_{s}(s) +ru(s)^3]ds	
	=\bbl[2e^{\frac{r}{2}s}u(s)^3\bbr]\bbr|_{s=0}^{s} -r\int_{0}^{1}e^{\frac{r}{2}s}\bl[u(s)^3 -u(s)^3\br] ds
	=0.
\end{equation*}
Again using integration by parts, and invoking the boundary conditions $u(0)=u(1)=u_{s}(1)=0$, it also follows that
\begin{align*}
	\dot{V}(u)
	=-2\ip{u}{\mcl{P}_{\{P_{0},0,0\}}u_{sss}}_{L_2}
	&=-2\int_{0}^{1}e^{\frac{r}{2}s}u(s)u_{sss}(s)ds	\\
	&=-2\bbl[e^{\frac{r}{2}s}u(s)u_{ss}(s)\bbr]\bbr|_{s=0}^{1}	 +2\int_{0}^{1}\bbl[\frac{r}{2}e^{\frac{r}{2}s}u(s)+e^{\frac{r}{2}s}u_{s}(s)\bbr]u_{ss}(s)ds	\\
	&=r\bbl[e^{\frac{r}{2}s}u(s)u_{s}(s)\bbr]\bbr|_{s=0}^{1}-\int_{0}^{1}\bbl[\frac{r^2}{2}e^{\frac{r}{2}s}u(s) +re^{\frac{r}{2}s}u_{s}(s)\bbr]u_{s}(s)ds	\\
	&\qquad+\bbl[e^{\frac{r}{2}s}u_{s}(s)^2\bbr]\bbr|_{s=0}^{1}-\int_{0}^{1}\frac{r}{2}e^{\frac{r}{2}s}u_{s}(s)^2ds	\\
	&=-\frac{r^2}{4}\bbl[e^{\frac{r}{2}s}u(s)^2\bbr]\bbr|_{s=0}^{1}
	+\frac{r^3}{8}\int_{0}^{1}e^{\frac{r}{2}s}u(s)^2 ds	
	-r\int_{0}^{1}e^{\frac{r}{2}s}u_{s}(s)^2 ds	\\
	&\qquad-u_{s}(0)^2-\frac{r}{2}\int_{0}^{1}e^{\frac{r}{2}s}u_{s}(s)^2ds \\
	&=\int_{0}^{1}e^{\frac{r}{2}s}\bbl[\frac{r^3}{8}u(s)^2-\frac{3r}{2}u_{s}(s)^2\bbr] ds -u_{s}(0)^2.
\end{align*}
By the Poincar\'e inequality, this will be negative for sufficiently small $r$. In particular, we note that we can bound
\begin{align*}
	\int_{0}^{1}e^{\frac{r}{2}s}\frac{r^3}{8}u(s)^2  ds 
	&\leq e^{\frac{r}{2}}\frac{r^3}{8}\int_{0}^{1}u(s)^2 ds =\frac{r^3 e^{\frac{r}{2}}}{8}\|u\|_{L_2}^2,	&	&\text{and}	&
	-\int_{0}^{1}e^{\frac{r}{2}s}\frac{3r}{2}u_{s}(s)^2 ds
	&\leq -\frac{3r}{2}\int_{0}^{1}u_{s}(s)^2 ds
	=-\frac{3r}{2}\|u_{s}\|_{L_2}^2.
\end{align*}
Here, by the Poincar\'e inequality on the unit interval $s\in[0,1]$, we know that
\begin{equation*}
	\|u\|_{L_2}\leq \frac{1}{\pi}\|u_{s}\|_{L_2},\qquad \forall u\in H_{1}[0,1].
\end{equation*}
Combining these results, it follows that
\begin{equation*}
	\dot{V}(u)
	=\int_{0}^{1}e^{\frac{r}{2}s}\bbl[\frac{r^3}{8}u(s)^2-\frac{3r}{2}u_{s}(s)^2\bbr] ds -u_{s}(0)^2	
	\leq \frac{r^3 e^{\frac{r}{2}}}{8}\|u\|_{L_2}^2 -\frac{3r}{2}\|u_{s}\|_{L_2}^2 -u_{s}(0)^2	
	\leq \bbl[\frac{r^3 e^{\frac{r}{2}}}{8\pi^2} -\frac{3r}{2}\bbr]\|u_{s}\|_{L_2}^2.
\end{equation*}
This expression will be nonpositive whenever $r^2 e^{\frac{r}{2}}\leq 12\pi^2$, which yields a value of roughly $r\approx 4.001777$. Thus, the system is stable whenever $0\leq r\leq 4.00177$, though this bound is likely very conservative.

\subsection{Conservative Stability Bound for Modified Kuramoto-Sivashinsky Equation}\label{appx:KSE}

Recall the Kuramoto-Sivashinsky Equation (KSE) from Subsection~\ref{sec:examples:KSE},
\begin{align*}
	\tbf{PDE:}\quad
	u_{t}(t,s)\!&=\! -u_{ssss}(t,s) -\!u_{ss}(t,s) -\! u(t,s)[ru(t,s)\!+\!u_{s}(t,s)],\quad		\\
	\tbf{BCs:}\quad
	u(t,0)\!&=\!u(t,1)=u_{s}(t,0)=u_{s}(t,1)=0.
\end{align*}
To verify stability of this system, consider the Lyapunov functional candidate
\begin{equation*}
	V(u)=\ip{u}{\mcl{P}_{\{P_{0},0,0\}}u}_{L_2}=\int_{0}^{1}e^{3rs}u(s)^2 ds,\qquad \text{where}\quad P_{0}(s)=e^{3rs}.
\end{equation*}
Along solutions to the PDE, this functional satisfies
\begin{equation*}
	\dot{V}(u)=\ip{u_{t}}{\mcl{P}_{\{P_{0},0,0\}}u}_{L_2} +\ip{u}{\mcl{P}_{\{P_{0},0,0\}}u_{t}}_{L_2}	
	=-2\ip{u}{\mcl{P}_{\{P_{0},0,0\}}u[ru+u_{s}]}_{L_2}	 -2\ip{u}{\mcl{P}_{\{P_{0},0,0\}}[u_{ssss}+u_{ss}]}_{L_2}.
\end{equation*}
Here, using integration by parts, and invoking the boundary conditions $u(0)=u(1)=0$, we remark that that
\begin{equation*}
	\ip{u}{\mcl{P}_{\{P_{0},0,0\}}u[ru+u_{s}]}_{L_2}	
	=\int_{0}^{1}e^{3rs}[u(s)^2u_{s}(s) +ru(s)^3]ds	
	=\bbl[\frac{1}{3}e^{3rs}u(s)^3\bbr]\bbr|_{s=0}^{s} -r\int_{0}^{1}e^{3rs}\bl[u(s)^3 -u(s)^3\br] ds
	=0.
\end{equation*}
Hence,
\begin{equation*}
	\dot{V}(u)
	=-2\ip{u}{\mcl{P}_{\{P_{0},0,0\}}[u_{ssss}+u_{ss}]}_{L_2}
	=-2\ip{u}{\mcl{P}_{\{P_{0},0,0\}}u_{ssss}}_{L_2} -2\ip{u}{\mcl{P}_{\{P_{0},0,0\}}u_{ss}}_{L_2}.
\end{equation*}
Here, again using integration by parts, and invoking the boundary conditions $u(0)=u(1)=u_{s}(0)=u_{s}(1)=0$, it follows that
\begin{align*}
	-\ip{u}{\mcl{P}_{\{P_{0},0,0\}}u_{ssss}}_{L_2}
	&=-\int_{0}^{1}e^{3rs}u(s)u_{ssss}(s)ds	\\
	&=-\bbl[e^{3rs}u(s)u_{sss}(s)\bbr]\bbr|_{s=0}^{1}	
	 +\int_{0}^{1}\bbl[3re^{3rs}u(s)+e^{3rs}u_{s}(s)\bbr]u_{sss}(s)ds	\\
	&=3r\bbl[e^{3rs}u(s)u_{ss}(s)\bbr]\bbr|_{s=0}^{1} 
	-\int_{0}^{1}\bbl[9r^2 e^{3rs}u(s) +3re^{3rs}u_{s}(s)\bbr]u_{ss}(s)ds	\\
	&\qquad+\bbl[e^{3rs}u_{s}(s)u_{ss}(s)\bbr]\bbr|_{s=0}^{1}	
	-\int_{0}^{1}\bbl[3re^{3rs}u_{s}(s) +e^{3rs}u_{ss}(s)\bbr]u_{ss}(s)ds	\\
	&=-9r^2\bbl[e^{3rs}u(s)u_{s}(s)\bbr]\bbr|_{s=0}^{1}	
	+\int_{0}^{1}\bbl[27r^3 e^{3rs}u(s) +9r^2e^{3rs}u_{s}(s)\bbr]u_{s}(s)ds	\\
	&\quad -3r\bbl[e^{3rs}u_{s}(s)^2\bbr]\bbr|_{s=0}^{1} +9r^2\int_{0}^{1}e^{3rs}u_{s}(s)^2 ds	
	 -\int_{0}^{1}e^{3rs}u_{ss}(s)^2 ds	\\
	&=\frac{27r^3}{2}\bbl[e^{3rs}u(s)^2\bbr]\bbr|_{s=0}^{1} -\frac{81r^{4}}{2}\int_{0}^{1}e^{3rs}u(s)^2 ds	\\
	&\quad +9r^2\int_{0}^{1}e^{3rs}u_{s}(s)^2 ds +9r^2\int_{0}^{1}e^{3rs}u_{s}(s)^2 ds	
	 -\int_{0}^{1}e^{3rs}u_{ss}(s)^2 ds	\\
	&=\int_{0}^{1}e^{3rs}\bbl[-\frac{81r^{4}}{2}u(s)^2 +18r^2 u_{s}(s)^2-u_{ss}(s)^2\bbr] ds.
\end{align*}
Similarly, we find that
\begin{align*}
	-\ip{u}{\mcl{P}_{\{P_{0},0,0\}}u_{ss}}_{L_2}
	&=-\int_{0}^{1}e^{3rs}u(s)u_{ss}(s)ds	\\
	&=-\bbl[e^{3rs}u(s)u_{s}(s)\bbr]\bbr|_{s=0}^{1} +\int_{0}^{1}\bbl[3re^{3rs}u(s)+e^{3rs}u_{s}(s)\bbr]u_{s}(s)ds	\\
	&=\frac{3r}{2}\bbl[e^{3rs}u(s)^2\bbr]\bbr|_{s=0}^{1} -\frac{9r^2}{2}\int_{0}^{1}e^{3rs}u(s)^2 ds +\int_{0}^{1}e^{3rs}u_{s}(s)^2 ds	
	=\int_{0}^{1}e^{3rs}\bbl[-\frac{9r^2}{2}u(s)^2+u_{s}(s)^2\bbr] ds.
\end{align*}
Combining these results, it follows that
\begin{align*}
	\dot{V}(u)
	=-2\ip{u}{\mcl{P}_{\{P_{0},0,0\}}u_{ssss}}_{L_2} -2\ip{u}{\mcl{P}_{\{P_{0},0,0\}}u_{ss}}_{L_2}	
	=\int_{0}^{1}e^{3rs}\bbl[-9r^2\bl[9r^{2}+1\br]u(s)^2 +\bl[36r^2+2\br] u_{s}(s)^2-2u_{ss}(s)^2\bbr] ds.
\end{align*}
By the Poincar\'e inequality, this will be negative whenever $|r|$ is sufficiently small. In particular, we note that we can bound
\begin{align*}
	\int_{0}^{1}e^{3rs}\bl[36r^2+2\br]u_{s}(s)^2  ds 
	&\leq e^{3r}\bl[36r^2+2\br]\int_{0}^{1}u_{s}(s)^2 ds =e^{3r}\bl[36r^2+2\br]\|u_{s}\|_{L_2}^2,	\\
	-\int_{0}^{1}e^{\frac{r}{2}s}2u_{ss}(s)^2 ds
	&\leq -2\int_{0}^{1}u_{ss}(s)^2 ds
	=-2\|u_{ss}\|_{L_2}^2.
\end{align*}
Here, by the Poincar\'e inequality on the unit interval $s\in[0,1]$, we know that
\begin{equation*}
	\|u_{s}\|_{L_2}\leq \frac{1}{\pi}\|u_{ss}\|_{L_2},\qquad \forall u\in H_{2}[0,1].
\end{equation*}
Combining these results, it follows that
\begin{align*}
	\dot{V}(u)
	&=\int_{0}^{1}e^{3rs}\bbl[-9r^2\bl[9r^{2}+1\br]u(s)^2 +\bl[36r^2+2\br] u_{s}(s)^2-2u_{ss}(s)^2\bbr] ds	\\
	&\leq -9r^2\bl[9r^{2}+1\br]\int_{0}^{1}e^{3rs}u(s)^2 ds + e^{3r}\bl[36r^2+2\br]\|u_{s}\|_{L_2}^2 -2\|u_{ss}\|_{L_2}^2	\\
	&\leq -9r^2\bl[9r^{2}+1\br]\int_{0}^{1}e^{3rs}u(s)^2 ds +\bbl[\frac{e^{3r}\bl[36r^2+2\br]}{\pi^2} -2\bbr]\|u_{ss}\|_{L_2}^2	
	\qquad\leq \bbl[\frac{e^{3r}\bl[36r^2+2\br]}{\pi^2} -2\bbr]\|u_{ss}\|_{L_2}^2.
\end{align*}
This expression will be nonpositive whenever $e^{3r}[18r^2+1]\leq \pi^2$, which yields a value of roughly $r\approx 0.36082$. Thus, the system is stable whenever $|r|\leq 0.3608$, though this bound is likely very conservative.


\end{document}